\theoremstyle{plain}
\newtheorem{theorem}{Theorem}
\newtheorem{cor}[theorem]{Corollary}
\newtheorem{lemma}[theorem]{Lemma}
\newtheorem{conj}[theorem]{Conjecture}
\theoremstyle{definition}
\theoremstyle{remark}
\newcommand{\bk}{\backslash}
\newcommand{\lm}{\lambda}
\begin{document}
\title{A Deletion-Contraction Relation for the Chromatic Symmetric Function}
\author{Logan Crew\thanks{Department of Mathematics, University of Pennsylvania, Philadelphia, PA. Email: {\tt crewl@math.upenn.edu}}\,\, and Sophie Spirkl\thanks{Department of Mathematics, Princeton University, Princeton, NJ. Email: {\tt sspirkl@math.princeton.edu}.}}
\date{\today}

\maketitle

\begin{abstract}
  We extend the definition of the chromatic symmetric function $X_G$ to include graphs $G$ with a vertex-weight function $w : V(G) \rightarrow \mathbb{N}$.  We show how this provides the chromatic symmetric function with a natural deletion-contraction relation analogous to that of the chromatic polynomial.  Using this relation we derive new properties of the chromatic symmetric function, and we give alternate proofs of many fundamental properties of $X_G$.
\end{abstract}

\textit{Keywords: symmetric functions, chromatic symmetric function}

\begin{section}{Introduction}\label{Introduction}

  The chromatic symmetric function $X_G$ of a graph $G$ was introduced by Stanley in 1995 \cite{stanley} as a generalization of the chromatic polynomial $\chi_G(x)$.  It is defined as
  $$
X_G(x_1,x_2,\dots) = \sum_{\kappa} \prod_{v \in V(G)} x_{\kappa(v)}
  $$
where the sum ranges over all proper colorings $\kappa$ of $G$.  Recent research on $X_G$ has focused on (among other topics) the Stanley-Stembridge conjecture that the chromatic symmetric function of the incomparability graph of a $(3+1)$-free poset is $e$-positive \cite{huh, epos, dahl, foley, guay, stanley}, the related conjecture that the chromatic symmetric function of a claw-free graph is $s$-positive \cite{gash, pau, paw, wang}, and the conjecture that $X_G$ distinguishes nonisomorphic trees \cite{trees, heil}.  Other results have extended the definition of $X_G$ to include quasisymmetric functions \cite{per, ell, wachs} or noncommuting variables \cite{dahl2, noncomm}.

In this paper we extend $X_G$ in a different direction.  Our motivation is the observation that while the chromatic polynomial $\chi_G$ of a graph $G$ admits the simple edge deletion-contraction relation
$$\chi_G = \chi_{G \bk e}-\chi_{G / e}
$$
the chromatic symmetric function $X_G$ does not admit such a relation.  This is because $X_G$ is always homogeneous of degree $|V(G)|$, so trying to formulate such a relation encounters difficulties when considering edge contraction.  Instead, the best known edge recurrence relation on $X_G$ itself is a triangular relation discovered by Orellana and Scott in 2014 \cite{ore}, and its generalization to all cycles in 2018 by Dahlberg and van Willigenburg \cite{dahl}. 

To provide $X_G$ with a deletion-contraction relation in a natural way, we extend the definition to include pairs $(G,w)$ consisting of a graph $G$ and a vertex-weight function $w : V(G) \rightarrow \mathbb{N}$.  Then the extended function

$$
X_{(G,w)}(x_1,x_2,\dots) = \sum_{\kappa} \prod_{v \in V(G)} x_{\kappa(v)}^{w(v)}
$$
admits a generalization of the classic deletion-contraction relation of the chromatic polynomial of the form

$$
X_{(G,w)} = X_{(G \bk e, w)} - X_{(G / e, w / e)}
$$
where $w / e$ indicates that when we contract an edge $e$, the weight of the contacted vertex is the sum of the weights of the endpoints of $e$.

This approach builds upon previous extensions of the chromatic symmetric function that admit deletion-contraction relations.  An early example was the chromatic symmetric function in noncommuting variables $Y_G(x_1,x_2,\dots)$, introduced by Gebhard and Sagan in 1999 \cite{noncomm}, which satisfies $Y_G = Y_{G \bk e} - Y_{G / e}\uparrow$, where $\uparrow$ is an operation they define called induction.  This induction operation takes a variable $x_i$ representing the color of a vertex and duplicates it, which is similar to our contraction term $X_{(G / e, w / e)}$ that adds vertex weights.  More recently, the pointed chromatic symmetric function $X_{G,v}(t,x_1,x_2,\dots)$, rooted at a vertex $v$, was introduced by Pawlowski in 2018 \cite{paw} and satisfies $X_{G,v} = X_{G \bk e, v} - tX_{G/e,v}$ when $e$ is an edge incident to $v$.

Additionally, $X_{(G,w)}$ relates to work of Noble and Welsh \cite{noble}.  Their $W$-polynomial, defined on vertex-weighted graphs, admits a deletion-contraction relation.  They show that $X_G$ can be recovered from the $W$-polynomial on unweighted graphs, and it can be proved in an analogous way that $X_{(G,w)}$ can be recovered from the $W$-polynomial for vertex-weighted graphs.  Moreover, they demonstrate that in unweighted graphs, the $W$-polynomial contains the same information as $XB_G$, the bad-coloring chromatic symmetric function of Stanley \cite{stanley2}.  A simple modification of their argument shows that the natural extension of $X_{(G,w)}$ to a vertex-weighted form of $XB_G$ contains the same information as the $W$-polynomial for all vertex-weighted graphs.  The authors' paper \cite{paper} provides further research in this direction.

In this paper, we prove new properties of and rederive known results for $X_G$ by proving them for the more general $X_{(G,w)}$, and in the case of previously known results, these proofs are substantially different in nature from the original ones, as they depend primarily on simple enumerative techniques and induction using deletion-contraction.
  
In Section 2 we provide background on graphs and symmetric functions that will be used throughout this paper.  In Section 3 we define formally our extended function on vertex-weighted graphs, prove that a deletion-contraction relation holds, and use it to provide alternate proofs of known results by extending them to the vertex-weighted case.  In Section 4 we derive a new result on the $e$-basis expansion of $X_{(G,w)}$ that generalizes a result of \cite{stanley}.  Finally, in Section 5 we consider further applications of $X_{(G,w)}$.  In particular, we define an extension of $X_{(G,w)}$ that generalizes the chromatic quasisymmetric function of Shareshian and Wachs \cite{wachs} and show that it satisfies a deletion-contraction relation, and we show that $X_{(G,w)}$ is neither $e$-positive nor $e$-negative when $w$ is not the weight function that gives every vertex weight $1$.  We also discuss potential applications of $X_{(G,w)}$ to $s$-positivity, partition systems, the umbral chromatic polynomial, the path-cycle symmetric function for digraphs, and functions on double posets.

\end{section}

\begin{section}{Background}\label{Background}

  An \emph{integer partition} (or just \emph{partition}) is a tuple $\lm = (\lm_1,\dots,\lm_k)$ of positive integers such that $\lm_1 \geq \dots \geq \lm_k$.  The integers $\lm_i$ are the \emph{parts} of $\lm$.  If $\sum_{i=1}^k \lm_i = n$, we say that $\lm$ is a partition of $n$, and we write $\lm \vdash n$, or $|\lm| = n$.  The number of parts $k$ is the \emph{length} of $\lm$, and is denoted by $l(\lm)$.  The number of parts equal to $i$ in $\lm$ is given by $r_i(\lm)$.

  The \emph{Young diagram} $Y(\lm)$ of shape $\lm$ is a set of boxes, left- and top-justified, so that there are $\lm_i$ boxes in the $i^{th}$ row from the top.  A \emph{filling} of $Y(\lm)$ with elements from a set $S$ is an assignment of an $s \in S$ to each box of $Y(\lm)$.  A \emph{Young tableau} $T$ of shape $\lm$ is a filling of $Y(\lm)$ with positive integers $\mathbb{Z}^+$.  A Young tableau $T$ is \emph{semi-standard} if
  \begin{itemize}
    \item Its integers are \emph{weakly increasing} along rows left-to-right.
    \item Its integers are \emph{strictly increasing} down columns.
  \end{itemize}
  For example, the following is a semi-standard Young tableau of shape $(4,2,2,1)$:
  $$\young(1124,23,44,6)$$

  A function $f(x_1,x_2,\dots) \in \mathbb{R}[[x_1,x_2,\dots]]$ is \emph{symmetric} if $f(x_1,x_2,\dots) = f(x_{\sigma(1)},x_{\sigma(2)},\dots)$ for every permutation $\sigma$ of the positive integers $\mathbb{N}$.  The \emph{algebra of symmetric functions} $\Lambda$ is the subalgebra of $\mathbb{R}[[x_1,x_2,\dots]]$ consisting of those symmetric functions $f$ that are of bounded degree (that is, there exists a positive integer $n$ such that every monomial of $f$ has degree $\leq n$).  Furthermore, $\Lambda$ is a graded algebra, with natural grading
  $$
  \Lambda = \bigoplus_{k=0}^{\infty} \Lambda^d
  $$
  where $\Lambda^d$ consists of symmetric functions that are homogeneous of degree $d$ \cite{mac,stanleybook}.

  Each $\Lambda^d$ is a finite-dimensional vector space over $\mathbb{R}$, with dimension equal to the number of partitions of $d$ (and thus, $\Lambda$ is an infinite-dimensional vector space over $\mathbb{R}$).  There are five commonly-used bases of $\Lambda$ that are indexed by partitions $\lm = (\lm_1,\dots,\lm_k)$ (for more details see e.g. \cite{mac,stanleybook}):

  The monomial symmetric functions, defined as
  $$
  m_{\lm} = \sum_{\sigma \in S_{\mathbb{N}}} x_{\sigma(1)}^{\lm_1}x_{\sigma(2)}^{\lm_2} \dots x_{\sigma(k)}^{\lm_k}.
  $$

  The power-sum symmetric functions, defined by the equations
  $$
  p_n = \sum_{k=1}^{\infty} x_k^n, \hspace{0.3cm} p_{\lm} = p_{\lm_1}p_{\lm_2} \dots p_{\lm_k}.
  $$

  The elementary symmetric functions, defined by the equations
  $$
  e_n = \sum_{i_1 < \dots < i_n} x_{i_1} \dots x_{i_n}, \hspace{0.3cm} e_{\lm} = e_{\lm_1}e_{\lm_2} \dots e_{\lm_k}.
  $$

  The homogeneous symmetric functions, defined by the equations
  $$
  h_n = \sum_{\mu \vdash n} m_{\mu}, \hspace{0.3cm} h_{\lm} = h_{\lm_1}h_{\lm_2} \dots h_{\lm_k}.
  $$

  And the Schur functions, which may be defined as
  $$
  s_{\lm} = \sum_{\mu \vdash |\lm|} K_{\lm\mu}m_{\mu}
  $$
  where $K_{\lm\mu}$ is the number of semi-standard Young tableaux $T$ of shape $\lm$ where for all positive integers $i$, $T$ contains $\mu_i$ instances of $i$.  Among these bases, in discussing the chromatic symmetric function we will focus mostly on the basis of elementary symmetric functions, with reference also to the power-sum basis (in Section $3$) and the Schur function basis (in Section $5$).

  Given a symmetric function $f$ and a basis $b$ of $\Lambda$, we say that $f$ is \emph{$b$-positive} if when we write $f$ in the basis $b$, all coefficients are nonnegative.

  We define the \emph{symmetric function involution} $\omega$ by $\omega(p_{\lm}) = $ $(-1)^{|\lm|-l(\lm)}p_{\lm}$.

  A \emph{graph} $G = (V,E)$ consists of a \emph{vertex set} $V$ and an \emph{edge multiset} $E$ where the elements of $E$ are pairs of (not necessarily distinct) elements of $V$.  An edge $e \in E$ that contains the same vertex twice is called a \emph{loop}.  If there are two or more edges that each contain the same two vertices, they are called \emph{multi-edges}.  A \emph{simple graph} is a graph $G = (V,E)$ in which $E$ does not contain loops or multi-edges (thus, $E \subseteq \binom{V}{2}$).  If $\{v_1,v_2\}$ is an edge (or nonedge), we will write it as $v_1v_2 = v_2v_1$.  The vertices $v_1$ and $v_2$ are the \emph{endpoints} of the edge $v_1v_2$.  We will use $V(G)$ and $E(G)$ to denote the vertex set and edge multiset of a graph $G$ respectively.

  The \emph{complement} of a simple graph $G = (V,E)$ is denoted $\overline{G}$, and is defined as $\overline{G} = (V, \binom{V}{2} \bk E)$, so in $\overline{G}$ every edge of $G$ is replaced by a nonedge, and every nonedge is replaced by an edge.

  A \emph{subgraph} of a graph $G$ is a graph $G' = (V',E')$ where $V' \subseteq V$ and $E' \subseteq E|_{V'}$, where $E|_{V'}$ is the set of edges with both endpoints in $V'$.  An \emph{induced subgraph} of $G$ is a graph $G' = (V',E|_{V'})$ with $V' \subseteq V$.  A \emph{stable set} of $G$ is a subset $V' \subseteq V$ such that $E|_{V'} = \emptyset$.  A \emph{clique} of $G$ is a subset $V' \subseteq V$ such that for every pair of distinct vertices $v_1$ and $v_2$ of $V'$, $v_1v_2 \in E(G)$.

  A \emph{path} in a graph $G$ is a sequence of edges $v_1v_2$, $v_2v_3$, \dots, $v_{k-1}v_k$ such that $v_i \neq v_j$ for all $i \neq j$.  The vertices $v_1$ and $v_k$ are the \emph{endpoints} of the path.  A \emph{cycle} in a graph is a sequence of edges $v_1v_2$, $v_2v_3$, \dots, $v_kv_1$ such that $v_i \neq v_j$ for all $i \neq j$.  Note that in a simple graph every cycle must have at least $3$ edges, although in a nonsimple graph there may be cycles of size $1$ (a loop) or $2$ (multi-edges).  

  A graph $G$ is \emph{connected} if for every pair of distinct vertices $v_1$ and $v_2$ of $G$ there is a path in $G$ with $v_1$ and $v_2$ as its endpoints.  The \emph{connected components} of $G$ are the maximal induced subgraphs of $G$ which are connected.
  
  The \emph{complete graph} $K_n$ on $n$ vertices is the unique simple graph having all possible edges, that is, $E(K_n) = \binom{V}{2}$.

  Given a graph $G$, there are two commonly used operations that produce new graphs.  One is \emph{deletion}: given an edge $e \in E(G)$, the graph of $G$ \emph{with} $e$ \emph{deleted} is the graph $G' = (V(G), E(G) \bk \{e\})$, and is denoted $G \bk e$.  Likewise, if $S$ is a multiset of edges, we use $G \bk S$ to denote the graph $(V(G),E(G) \bk S)$.

  The other operation is the \emph{contraction of an edge} $e = v_1v_2$, denoted $G / e$.  If $v_1 = v_2$ ($e$ is a loop), we define $G / e = G \bk e$.  Otherwise, we create a new vertex $v^*$, and define $G / e$ as the graph $G'$ with $V(G') = (V(G) \bk \{v_1,v_2\}) \cup v^*$, and $E(G') = (E(G) \bk E(v_1, v_2)) \cup E(v^*)$, where $E(v_1,v_2)$ is the set of edges with at least one of $v_1$ or $v_2$ as an endpoint, and $E(v^*)$ consists of each edge in $E(v_1,v_2) \bk e$ with the endpoint $v_1$ and/or $v_2$ replaced with the new vertex $v^*$.  Note that this is an operation on a (possibly nonsimple) graph that identifies two vertices while keeping and/or creating multi-edges and loops.

  There is also a different version of edge contraction that is defined only on simple graphs.  In the case that $G$ is a simple graph, we define the \emph{simple contraction} $G \nmid e$ to be the same as $G / e$ except that after performing the contraction operation, we delete any loops and all but a single copy of each multi-edge so that the result is again a simple graph.  

  Let $G = (V(G),E(G))$ be a (not necessarily simple) graph.  A map $\kappa: V(G) \rightarrow \mathbb{N}$ is called a \emph{coloring} of $G$.  This coloring is called \emph{proper} if $\kappa(v_1) \neq \kappa(v_2)$ for all $v_1,v_2$ such that there exists an edge $e = v_1v_2$ in $G$.  As was described in Section 1, the \emph{chromatic symmetric function} $X_G$ of $G$ is defined as
  
  $$X_G(x_1,x_2,\dots) = \sum_{\kappa} \prod_{v \in V(G)} x_{\kappa(v)}$$ 
  where the sum runs over all proper colorings $\kappa$ of $G$.  Note that if $G$ contains a loop then $X_G = 0$, and $X_G$ is unchanged by replacing any multi-edges by a single edge.
  
  A thorough overview of $X_G$ is given in \cite{stanley}; we postpone introducing its properties to the next section, where we will also prove generalizations of them.

\end{section}

\begin{section}{Extending $X_G$ to Vertex-Weighted Graphs}\label{Weight}

  Define a \emph{vertex-weighted graph} $(G,w)$ to be a graph $G$ together with a vertex-weight function $w: V(G) \rightarrow \mathbb{N}$.  The \emph{weight of a vertex} $v \in V(G)$ is $w(v)$.  Using the notation
$$
x_{\kappa}(G, w) = \prod_{v \in V(G)} x_{\kappa(v)}^{w(v)}
$$
we generalize the chromatic symmetric function to vertex-weighted graphs as
  \begin{equation}\label{eq:XGW}
    X_{(G,w)} =  \sum_{\kappa} x_{\kappa}(G,w)
  \end{equation}
where the sum is taken over all proper colorings $\kappa$ of $G$.  We use this nonstandard notation as it will be convenient to refer explicitly to individual summands of $X_{(G,w)}$ in proofs.  Note that the usual chromatic symmetric function $X_G$ is equivalent to $X_{(G,w)}$ where $w$ is the function assigning weight $1$ to each vertex.

   Given a vertex-weighted graph $(G,w)$ and $A \subseteq V(G)$, define the \emph{total weight of} $A$, denoted $w(A)$, to be $\sum_{v \in A} w(v)$.  Define the \emph{total weight of} $G$ to be the total weight of $V(G)$.  Throughout this paper, when $G$ is clear we will generally use $n$ to denote the number of vertices of $G$, and $d$ to denote the total weight of $G$.

   Let $\lm = (\lm_1,\dots,\lm_k)$ be a partition.  Define $St_{\lm}(G,w)$ to be the set of (unordered) partitions of $V(G)$ into $k = l(\lm)$ stable sets whose total weights are $\lm_1,\dots,\lm_k$.  We begin by establishing a simple formula for expanding $X_{(G,w)}$ in the monomial basis:

   \begin{lemma}

     If $(G,w)$ is a vertex-weighted graph with $n$ vertices and total weight $d$, then 
    \begin{equation}\label{eq:mbasis}
      X_{(G,w)} = \sum_{\lm \vdash d} |St_{\lm}(G,w)|\left(\prod_{i=1}^{d} r_i(\lm)!\right) m_{\lm}
    \end{equation}
    where we recall that $r_i(\lm)$ is the number of parts of $\lm$ equal to $i$.
  \end{lemma}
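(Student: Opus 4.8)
The plan is to group the proper colourings of $G$ according to the set partition of $V(G)$ into colour classes that each one induces, and to sum within each group. Given a proper colouring $\kappa$, its nonempty colour classes $\{\kappa^{-1}(c) : c \in \mathrm{im}(\kappa)\}$ form an unordered partition $\pi(\kappa)$ of $V(G)$ into stable sets (stable because $\kappa$ is proper, and every vertex lies in exactly one class). Conversely, for a partition $\pi$ of $V(G)$ into $k$ nonempty stable sets, the proper colourings $\kappa$ with $\pi(\kappa) = \pi$ are exactly those that assign a distinct colour to each of the $k$ blocks; they are therefore in bijection with the injections $\phi$ from the set of blocks of $\pi$ into $\mathbb{N}$, and the monomial attached to such a $\kappa$ is $x_\kappa(G,w) = \prod_{B \in \pi} x_{\phi(B)}^{w(B)}$.

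First I would fix a partition $\pi = \{B_1,\dots,B_k\}$ of $V(G)$ into stable sets and let $\lambda$ be the partition of $d = w(V(G))$ whose parts are the total weights $w(B_1),\dots,w(B_k)$ arranged in decreasing order; thus $\pi \in St_\lambda(G,w)$ and $l(\lambda) = k$. The total contribution to $X_{(G,w)}$ of all colourings inducing $\pi$ is $\sum_\phi \prod_{B \in \pi} x_{\phi(B)}^{w(B)}$, the sum over injections $\phi$ of the blocks into $\mathbb{N}$. Every term is a monomial supported on $k$ distinct variables whose exponent multiset is the multiset of parts of $\lambda$, so the sum is a scalar multiple of $m_\lambda$; the scalar is the number of injections $\phi$ yielding one fixed such monomial, which is $\prod_{i=1}^d r_i(\lambda)!$, because the $r_i(\lambda)$ blocks of total weight $i$ can be matched bijectively to the $r_i(\lambda)$ variables carrying exponent $i$ in $r_i(\lambda)!$ ways, independently for each $i$. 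Hence each $\pi \in St_\lambda(G,w)$ contributes exactly $\left(\prod_{i=1}^d r_i(\lambda)!\right) m_\lambda$.

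To conclude, I would observe that each proper colouring of $G$ lies in exactly one class $\pi(\kappa)$, and each partition of $V(G)$ into stable sets lies in $St_\lambda(G,w)$ for exactly one $\lambda \vdash d$; summing the contributions computed above gives
$$X_{(G,w)} = \sum_{\lambda \vdash d}\ \sum_{\pi \in St_\lambda(G,w)} \left(\prod_{i=1}^d r_i(\lambda)!\right) m_\lambda = \sum_{\lambda \vdash d} |St_\lambda(G,w)|\left(\prod_{i=1}^d r_i(\lambda)!\right) m_\lambda,$$
which is exactly \eqref{eq:mbasis}.

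I expect the only delicate point to be the counting in the middle step. One must use that $\pi(\kappa)$ records colour \emph{classes}, so distinct blocks automatically receive distinct colours — this is what makes the relevant maps $\phi$ injective rather than arbitrary — and one must correctly account for the fact that when several blocks have equal total weight the monomial $x_\kappa(G,w)$ does not remember $\phi$, which is precisely where the factor $\prod_{i=1}^d r_i(\lambda)!$ comes from. Beyond this, the argument is a direct unpacking of the definitions of $X_{(G,w)}$ and of $m_\lambda$.
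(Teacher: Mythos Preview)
Your proof is correct and is essentially the same argument as the paper's: both identify proper colourings with pairs (stable-set partition, injection of blocks into colours) and recognise the factor $\prod_i r_i(\lambda)!$ as the overcount from permuting colours among blocks of equal total weight. The only cosmetic difference is that the paper invokes symmetry of $X_{(G,w)}$ to reduce to checking the coefficient of a single monomial $x_1^{\lambda_1}\cdots x_k^{\lambda_k}$, whereas you compute the contribution of each $\pi\in St_\lambda(G,w)$ as a full multiple of $m_\lambda$ directly.
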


   \begin{proof}

     The proof is a simple modification of the proof of (\cite{stanley}, Theorem 2.4).  Since $X_{(G,w)}$ is symmetric, it suffices to show that the coefficient of $x_1^{\lm_1} \dots x_k^{\lm_k}$ is correct.  For every element of $St_{\lm}(G,w)$, label the stable sets $L_1,\dots,L_k$ in some order such that $|L_i| = \lm_i$.  Then there are $\left(\prod_{i=1}^{d} r_i(\lm)!\right)$ corresponding proper colorings $\kappa$ of $(G,w)$ such that $\forall i \exists j$ with $\kappa^{-1}(j) = L_i$ and also $x_{\kappa}(G,w) = x_1^{\lm_1} \dots x_k^{\lm_k}$, since one such coloring is $\kappa(L_i) = i$ for $1 \leq i \leq k$, and we may also permute the colors among those $L_i$ that have the same cardinality.  Since also clearly every proper $\kappa$ with $x_{\kappa}(G,w) = x_1^{\lm_1} \dots x_k^{\lm_k}$ has a corresponding element of $St_{\lm}(G,w)$ for which $\kappa$ is monochromatic on each part, the terms of $X_{(G,w)}$ are in one-to-one correspondence with those of the right-hand side of \eqref{eq:mbasis}, so the lemma is proved. 
     
  \end{proof}

  As an example, for a partition $\lm$, we define $K^{\lm} = (K_{l(\lm)},w)$ where $w(v_i) = \lm_i$ for some ordering $v_1,v_2,\dots,v_n$ of the vertices.  Since the only stable sets of $K_{l(\lm)}$ are single vertices, every coloring of $K_{l(\lm)}$ colors every vertex with a distinct color, and so only monomials of $m_{\lm}$ appear.  Each monomial of $m_{\lm}$ will occur once for each permutation of the colors of vertices with the same weights, and so we have
  $$
  X_{K^{\lm}} = \left(\prod_{i=1}^{\infty} r_i(\lm)!\right) m_{\lm}.
  $$

  Analogously, we define $\overline{K^{\lm}} = (\overline{K_{l(\lm)}}, w)$ where $w(v_i) = \lm_i$ for some ordering of the vertices.  Note that the chromatic symmetric function is multiplicative across disjoint unions, since we may color each of the connected components independently.  Since $X_{K^n} = p_n$, it follows that
  $$
  X_{\overline{K^{\lm}}} = p_{\lm}.
  $$

  Note that in the case of unweighted graphs, there is no $G$ such that $X_G$ is equal to a nonzero multiple of $m_{\lm}$ or $p_{\lm}$ except in the case that $\lm = 1^n$ \cite{cho}.\footnote{It is natural to ask which bases are ``representable'' as chromatic symmetric functions in this way.  In addition to the $m$- and $p$-bases, we obtain $X_G = \left(\prod \lm_i!\right)e_{\lm}$ even in the unweighted case by taking $G$ to be a disjoint union of cliques of sizes equal to the parts of $\lm$.  Furthermore, it is easy to show by combining the $p$-positivity of \eqref{eq:ppositive} with the results of \cite{cho} that there are no vertex-weighted graphs $(G,w)$ such that $X_{(G,w)}$ is a nonzero multiple of $h_{\lm}$ or $s_{\lm}$ except in the case that $\lm = 1^n$.}

\begin{subsection}{A Deletion-Contraction Relation}
  
  One of the primary motivations for extending the chromatic symmetric function to vertex-weighted graphs is the existence of a deletion-contraction relation in this setting.  Given a vertex-weighted graph $(G,w)$, and an edge $e = v_1v_2$ of $G$, let $w/e$ be the modified weight function on $G / e$ such that $w/e = w$ if $e$ is a loop, and otherwise $(w/e)(v) = w(v)$ if $v \neq v_1,v_2$, and for the vertex $v^*$ of $G/e$ formed by the contraction, $(w/e)(v^*) = w(v_1) + w(v_2)$.  Note that the same definition of $w / e$ may be applied to the simple contraction $G \nmid e$, so we use the same notation.

  \begin{lemma}\label{lem:delcon}
    Let $(G,w)$ be a vertex-weighted graph, and let $e \in E(G)$ be any edge.  Then
    \begin{equation}\label{eq:delcon}
      X_{(G,w)} = X_{(G \bk e, w)} - X_{(G / e, w / e)}
    \end{equation}
    and if $G$ is a simple graph,
    $$
    X_{(G,w)} = X_{(G \bk e, w)} - X_{(G \nmid e, w / e)}.
    $$
 
  \end{lemma}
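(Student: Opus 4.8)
The plan is to prove \eqref{eq:delcon} directly from the definition \eqref{eq:XGW} by sorting the proper colorings of $G \bk e$ according to the pair of colors they assign to the endpoints of $e$; the simple-contraction version will then be a one-line consequence. First I would dispose of the case that $e$ is a loop: then $G$ has no proper coloring, so $X_{(G,w)} = 0$, while $G/e = G \bk e$ and $w/e = w$ by definition, whence $X_{(G\bk e,w)} - X_{(G/e,w/e)} = 0$ as well. So from now on assume $e = v_1 v_2$ with $v_1 \neq v_2$.

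The core observation is that a coloring $\kappa : V(G) \to \mathbb{N}$ is a proper coloring of $G \bk e$ exactly when it is proper on every edge of $G$ other than $e$, and that such a $\kappa$ is a proper coloring of $G$ if and only if in addition $\kappa(v_1) \neq \kappa(v_2)$. I would therefore partition the proper colorings of $G \bk e$ into those with $\kappa(v_1) \neq \kappa(v_2)$ and those with $\kappa(v_1) = \kappa(v_2)$. The first family is precisely the set of proper colorings of $G$, and since $G$ and $G \bk e$ share the same vertex set and weight function, each such $\kappa$ contributes the identical monomial $x_\kappa(G \bk e,w) = x_\kappa(G,w)$; summing them gives exactly $X_{(G,w)}$. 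For the second family I would exhibit the bijection with proper colorings of $G/e$: a coloring $\kappa$ of $G \bk e$ with $\kappa(v_1) = \kappa(v_2) = c$ corresponds to the coloring $\kappa'$ of $G/e$ with $\kappa'(v^*) = c$ and $\kappa'(v) = \kappa(v)$ for all other $v$. One checks that $\kappa'$ is proper: every edge of $G/e$ is either an edge of $G$ avoiding $\{v_1,v_2\}$, on which $\kappa'$ agrees with the proper coloring $\kappa$, or is obtained from an edge $v_i u$ of $G$ with $u \notin \{v_1,v_2\}$ by renaming $v_i$ to $v^*$, on which properness of $\kappa$ at $v_i u$ gives $\kappa'(v^*) = c = \kappa(v_i) \neq \kappa(u) = \kappa'(u)$; and if a parallel copy of $e$ makes $G/e$ contain a loop, then this family is empty and $X_{(G/e,w/e)} = 0$, so the correspondence holds vacuously. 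Crucially, by the definition of $w/e$, the monomial attached to such a $\kappa$ is
$$x_\kappa(G \bk e, w) = x_c^{\,w(v_1) + w(v_2)} \prod_{v \neq v_1,v_2} x_{\kappa(v)}^{w(v)} = x_{\kappa'}(G/e, w/e),$$
so summing the second family gives exactly $X_{(G/e,w/e)}$. Adding the two contributions yields $X_{(G \bk e, w)} = X_{(G,w)} + X_{(G/e, w/e)}$, which rearranges to \eqref{eq:delcon}.

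Finally, when $G$ is simple the contraction $G/e$ can produce multi-edges but no loops (a loop would require an edge between $v_1$ and $v_2$ other than $e$), and $G \nmid e$ is obtained from $G/e$ by merging each set of parallel edges into one. Since whether a coloring is proper, and which monomial it contributes, depends only on the vertex set, the weight function, and the set of pairs of vertices joined by an edge --- not on edge multiplicities --- we get $X_{(G/e,w/e)} = X_{(G \nmid e, w/e)}$, and the second identity follows immediately from \eqref{eq:delcon}. I do not expect a genuine obstacle here; the only points requiring care are the treatment of loops and parallel edges in the non-simple case, and the bookkeeping that makes the merged vertex carry weight exactly $w(v_1)+w(v_2)$ --- trivial as this last point is, it is the whole reason the relation closes up, since it is what forces the monomials of the contraction term to match the total degree of those of $X_{(G,w)}$.
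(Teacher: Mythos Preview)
Your proof is correct and follows essentially the same approach as the paper: both rewrite the relation as $X_{(G\bk e,w)} = X_{(G,w)} + X_{(G/e,w/e)}$ and split the proper colorings of $G\bk e$ according to whether $\kappa(v_1)=\kappa(v_2)$, matching the two families to colorings of $G/e$ and $G$ respectively, with the loop case handled separately. The only cosmetic difference is ordering---the paper deduces $X_{(G/e,w/e)}=X_{(G\nmid e,w/e)}$ first and then proves \eqref{eq:delcon}, whereas you do it the other way around---and your treatment is slightly more explicit about parallel edges and the monomial bookkeeping.
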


  \begin{proof}

    First, we note that for a simple graph $G$, $X_{(G \nmid e, w / e)} = X_{(G / e, w / e)}$.  This is because the only case in which $G \nmid e$ is different from $G / e$ is in the case that some vertex $v'$ had edges to both endpoints of $e$, for then $G / e$ would have a multi-edge where $G \nmid e$ has a single edge.  But by Lemma 1 multi-edges may be reduced to a single edge without affecting the chromatic symmetric function, establishing the claim.  Thus, it suffices to prove \eqref{eq:delcon}.  

    We rewrite \eqref{eq:delcon} in the form
    \begin{equation}\label{eq:edgeadd}
    X_{(G \bk e, w)} = X_{(G,w)} + X_{(G / e, w / e)}.
    \end{equation}

    The statement is immediate if $e$ is a loop, so we may assume $e = v_1v_2$ connects distinct vertices, and we also let $v^*$ be the contracted vertex in $G / e$.  It suffices to show a one-to-one correspondence between terms of $X_{(G \bk e, w)}$ and terms of $X_{(G,w)}$ or $X_{(G / e, w / e)}$.  We consider two cases for each term $x_{\kappa}(G \bk e, w)$ (as defined in Section 2) occurring in the left-hand side of \eqref{eq:edgeadd} based on the proper coloring $\kappa$.  If $\kappa(v_1) = \kappa(v_2)$, then $x_{\kappa}(G \bk e, w) = x_{\kappa_e}(G / e, w / e)$, where $\kappa_e$ is the proper coloring of $G / e$ such that $\kappa_e(v^*) = \kappa(v_1)$, and for all other vertices $v$, $\kappa_e(v) = \kappa(v)$.  If $\kappa(v_1) \neq \kappa(v_2)$, then $x_{\kappa}(G \bk e, w) = x_{\kappa}(G, w)$.  This correspondence is injective, since changing the color of any vertex in $G \bk e$ changes the corresponding proper coloring of either $G$ or $G / e$.  This correspondence is also surjective, since given a proper coloring of $G$ or $G / e$, we can recover a proper coloring of $G \bk e$ that is its preimage under this map by removing $e$ or uncontracting $v^*$, respectively.

  \end{proof}

  Note that it is also possible to write this relation in the ``vertex uncontraction'' form
  \begin{equation}\label{eq:uncon}
X_{(G / e, w / e)} = X_{(G \bk e, w)} - X_{(G, w)}.
  \end{equation}
  This form has increased flexibility, because if we are given $(G / e, w / e)$, we may make two choices in uncontracting: first, if the vertex being uncontracted has weight greater than $2$, we may choose how to distribute the weights to the two new vertices in $G$, and second, for edges that were incident to the contracted vertex, we may choose how those edges are incident to the newly created vertices in $G$.  Thus, whenever this uncontraction form is used on a graph $(G / e, w / e)$ throughout this paper, we will specify the graph $(G,w)$.  

  One advantage of having a deletion-contraction relation is that to prove a property on graphs, we can pass to an appropriate property on vertex-weighted graphs, and either use the deletion-contraction property directly, or an inductive approach by showing that the property holds on graphs with no edges, and applying induction to the number of edges using deletion-contraction.

  To illustrate the power of this approach, we extend known properties of the chromatic symmetric function on unweighted graphs to the set of vertex-weighted graphs.  In doing so we provide new, alternate proofs of these properties in the unweighted case.

\end{subsection}

\begin{subsection}{$p$-Basis Expansion Formula}

  Given a vertex-weighted graph $(G,w)$, and $S \subseteq E(G)$, we define $\lm(G,w,S)$ to be the partition whose parts are the total weights of the connected components of $(G',w)$, where $G' = (V(G),S)$.
  
  \begin{lemma}
    \begin{equation}\label{eq:pbasis}
      X_{(G,w)} = \sum_{S \subseteq E(G)} (-1)^{|S|} p_{\lm(G,w,S)}
    \end{equation}
  \end{lemma}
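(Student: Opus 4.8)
The plan is to prove \eqref{eq:pbasis} by induction on $|E(G)|$, using the deletion-contraction relation \eqref{eq:delcon}. For the base case $E(G) = \emptyset$ every coloring of $G$ is proper, so $X_{(G,w)} = \prod_{v \in V(G)} \sum_{k \geq 1} x_k^{w(v)} = \prod_{v \in V(G)} p_{w(v)}$; on the right-hand side the only subset is $S = \emptyset$, the connected components of $(V(G),\emptyset)$ are the singleton vertices, and so $p_{\lm(G,w,\emptyset)} = \prod_{v \in V(G)} p_{w(v)}$ as well. For the inductive step I would fix an edge $e = v_1v_2 \in E(G)$. If $e$ is a loop then $X_{(G,w)} = 0$, and on the right-hand side pairing each $S$ with $e \notin S$ against $S \cup \{e\}$ cancels all terms in pairs, since adding a loop does not change the connected components of $(V(G),S)$, so $\lm(G,w,S) = \lm(G,w,S \cup \{e\})$ while the two signs are opposite; hence the right-hand side is also $0$. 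So we may assume $e$ joins distinct vertices, and let $v^*$ denote the contracted vertex of $G/e$.

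By Lemma \ref{lem:delcon} we have $X_{(G,w)} = X_{(G \bk e, w)} - X_{(G/e, w/e)}$, and both $G \bk e$ and $G/e$ have fewer edges than $G$, so the inductive hypothesis applies to each. Since $E(G \bk e) = E(G) \bk \{e\}$ and $(V(G), S) = (V(G \bk e), S)$ for every $S \subseteq E(G \bk e)$, the inductive hypothesis gives
$$X_{(G \bk e, w)} = \sum_{S \subseteq E(G),\, e \notin S} (-1)^{|S|} p_{\lm(G,w,S)}.$$
For the contraction term, each edge of $G/e$ is an edge of $E(G) \bk \{e\}$ with its endpoints among $v_1,v_2$ (if any) renamed to $v^*$, so $E(G/e)$ is identified with $E(G)\bk\{e\}$ as multisets, giving a bijection $S' \mapsto S := S' \cup \{e\}$ between subsets of $E(G/e)$ and subsets of $E(G)$ containing $e$, with $|S| = |S'| + 1$. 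The key claim is
$$\lm(G/e,\, w/e,\, S') = \lm(G,\, w,\, S) \qquad \text{whenever } S = S' \cup \{e\}.$$
Indeed, $(V(G/e), S')$ is obtained from $(V(G), S)$ by contracting $e$; because $e \in S$, the endpoints $v_1$ and $v_2$ already lie in one connected component of $(V(G),S)$, so contracting $e$ merges no components — it only replaces $\{v_1,v_2\}$ by $v^*$ inside that component. Thus the components of $(V(G/e),S')$ biject with those of $(V(G),S)$, and since $(w/e)(v^*) = w(v_1) + w(v_2)$ the corresponding components have equal total weight, so the two multisets of component weights coincide.

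Combining these, and using $(-1)^{|S'|} = -(-1)^{|S|}$ when $|S| = |S'|+1$,
$$-X_{(G/e, w/e)} = -\sum_{S' \subseteq E(G/e)} (-1)^{|S'|} p_{\lm(G/e, w/e, S')} = \sum_{S \subseteq E(G),\, e \in S} (-1)^{|S|} p_{\lm(G,w,S)},$$
and adding the two contributions yields $X_{(G,w)} = \sum_{S \subseteq E(G)} (-1)^{|S|} p_{\lm(G,w,S)}$, completing the induction. I expect the only step requiring genuine care to be the displayed identity $\lm(G/e,w/e,S') = \lm(G,w,S)$: that contracting an edge already present in the spanning subgraph $(V(G),S)$ preserves the multiset of component weights, which is exactly where the definition $(w/e)(v^*) = w(v_1)+w(v_2)$ is needed; the remaining ingredients (the edge bijection $E(G/e) \leftrightarrow E(G)\bk\{e\}$, the sign flip, and the loop case) are routine bookkeeping. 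As an alternative I could give a direct proof in the style of Stanley: write $X_{(G,w)} = \sum_{\kappa} x_{\kappa}(G,w)\sum_{S \subseteq B(\kappa)}(-1)^{|S|}$ where $\kappa$ ranges over all colorings and $B(\kappa)$ is the set of monochromatic edges (so the inner sum vanishes unless $\kappa$ is proper), then swap the order of summation, noting that the colorings with $S \subseteq B(\kappa)$ are exactly those constant on each component of $(V(G),S)$, contributing $\prod_C p_{w(C)} = p_{\lm(G,w,S)}$; but I would present the inductive argument as primary since it fits the deletion-contraction theme of the paper.
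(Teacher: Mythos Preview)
Your proof is correct and follows essentially the same approach as the paper's: both arguments use the deletion-contraction relation \eqref{eq:delcon} to reduce to edgeless graphs, where $X_{(G,w)} = p_{\lm}$. The only cosmetic difference is that the paper fixes an ordering $e_1,\dots,e_m$ of the edges and iteratively expands $X_{(G,w)}$ by applying deletion-contraction to every edge in turn, arriving directly at the sum $\sum_{S \subseteq E(G)} (-1)^{|S|} X_{(G(S),w(S))}$ with each $(G(S),w(S))$ edgeless, whereas you phrase the same unfolding as a standard induction on $|E(G)|$ and are a bit more explicit about the bookkeeping (the loop case and the verification that $\lm(G/e,w/e,S') = \lm(G,w,S' \cup \{e\})$).
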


  \begin{proof}

    This could be proved by adapting the proof of (\cite{stanley}, Theorem 2.5), but we give a different proof using deletion-contraction.  In our vertex-weighted graph $(G,w)$, let $e_1,e_2,\dots,e_m$ be an ordering of the edges.  We expand $X_{(G,w)}$ in the following manner:  First, in step 1 we apply deletion-contraction to $e_1$, and get

$$
X_{(G,w)} = X_{(G \bk e_1, w)} - X_{(G / e_1, w / e_1)}.
$$
Then, in step 2, we apply deletion-contraction to both of $(G \bk e_1, e)$ and $(G / e_1, w / e_1)$ using edge $e_2$, and obtain an equation with four terms.  Continuing in this manner, in step $i$ we apply deletion-contraction to all $2^{i-1}$ summands created in the previous step, until after step $m$ we have an equation of the form
$$
X_{(G,w)} = \sum_{S \subseteq E(G)} (-1)^{|S|}X_{(G(S), w(S))}
$$
where $(G(S),w(S))$ is the graph resulting from contracting the edges in $S$ and deleting the edges in $E(G) \bk S$, using our given ordering.  This graph has no edges, and each vertex corresponds to a connected component of the graph $G' = (V(G),S)$, since the vertices have been formed by the contraction of exactly those edges in $S$.  Furthermore, the weights of these vertices are the total weights of the connected components of $(G', w)$, since the weight of a vertex in $(G(S),w(S))$ is the sum of the weights of all the vertices in the corresponding component of $(G', w)$.  We recall that if $\overline{K^{\lm}}$ is the graph with no edges and vertices of weights $\lm_1 \geq \dots \geq \lm_k$, then $X_{\overline{K^{\lm}}} = p_{\lm}$.  Thus $X_{(G(S),w(S))} = p_{\lm(G,w,S)}$, and the result follows.

  \end{proof}

\end{subsection}

\begin{subsection}{The Effect of the Symmetric Function Involution}

  Given a graph $G$, define an \emph{orientation of} $G$ to be an assignment of an order (or orientation) to the endpoints of each edge $e \in E(G)$.  If we orient the edge $v_1v_2$ by placing $v_1$ before $v_2$, we write $v_1 \rightarrow v_2$, and say that $v_1$ is the \emph{tail}, and $v_2$ the \emph{head}.  An \emph{oriented cycle} of an orientation of $G$ is a sequence of edges $v_1v_2, v_2v_3,\dots,v_{m-1}v_0, v_0v_1$ that forms a cycle in $G$, and such that these edges are either all oriented such that $v_i \rightarrow v_{i+1}$ for all $i$, or $v_{i+1} \rightarrow v_i$ for all $i$ (with indices taken mod $m$ in both cases).  An \emph{acyclic orientation} of $G$ is one which contains no oriented cycle.

  Recall that with respect to an edge $e = v_1v_2 \in E(G)$ that is not a loop, we define the contracted graph $G / e$ to be $G'$ with $V(G') = (V(G) \bk \{v_1,v_2\}) \cup v^*$, and $E(G') = (E(G) \bk E(v_1, v_2)) \cup E(v^*)$, where $E(v_1,v_2)$ is the set of edges with at least one of $v_1$ or $v_2$ as an endpoint, and $E(v^*)$ consists of each edge in $E(v_1,v_2) \bk \{e\}$ with the endpoint $v_1$ and/or $v_2$ replaced with the new vertex $v^*$.  Using this notation, we define the \emph{contraction of the orientation} $\gamma$ \emph{with respect to} $e$ to be the orientation $\gamma_e$ of $G / e$ where
  \begin{itemize}
  \item Edges of $(E(G) \bk E(v_1, v_2))$ are oriented as they are in $\gamma$.
  \item If $vv_i, i \in \{1,2\}$ is an edge of $G$ for $v \neq v_1,v_2$, orient the corresponding edge $vv^*$ of $G / e$ such that $v$ is a head of $vv_i$ in $\gamma$ if and only if $v$ is a head of $vv^*$ in $\gamma_e$.
  \item If $v_iv_j, i,j \in \{1,2\}$ is an edge of $G$ other than $e$, the corresponding edge $v^*v^*$ is oriented trivially as $v^* \rightarrow v^*$.
  \end{itemize}

  Given a vertex-weighted graph $(G,w)$ and an acyclic orientation $\gamma$ of $G$, we define a coloring $\kappa$ of $G$ to be \emph{weakly proper with respect to} $\gamma$ if for every edge $e = v_1v_2$ oriented as $v_1 \rightarrow v_2$ by $\gamma$, we have $\kappa(v_1) \leq \kappa(v_2)$, and in this case we write

  $$
\overline{x}_{\kappa}(G,w,\gamma) = \prod_{v \in V(G)} x_{\kappa(v)}^{w(v)}.
  $$

    For a vertex-weighted graph $(G,w)$, we define its \emph{weak chromatic symmetric function} as
    \begin{equation}\label{eq:weak}
\overline{X}_{(G,w)} = \sum_{(\gamma, \kappa)} \overline{x}_{\kappa}(G,w,\gamma)
    \end{equation}
    where the sum ranges over all ordered pairs $(\gamma, \kappa)$ with $\gamma$ an acyclic orientation of $G$, and $\kappa$ a weakly proper coloring of $G$ with respect to $\gamma$.

    We prove the following formula for the vertex-weighted weak chromatic symmetric function, extending the formula for unweighted graphs given in (\cite{stanley}, Theorem 4.2):  

    \begin{theorem}\label{thm:invol}
      
      Let $(G,w)$ be a vertex-weighted graph with $n$ vertices and with total weight $d$.  Then
      \begin{equation}\label{eq:inv}
        \overline{X}_{(G,w)} = (-1)^{d-n}\omega(X_{(G,w)})
      \end{equation}
      where $\omega$ is the involution on symmetric functions defined by $\omega(p_{\lm}) = (-1)^{|\lm|-l(\lm)}p_{\lm}$.
    
    \end{theorem}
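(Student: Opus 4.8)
The plan is to prove Theorem~\ref{thm:invol} by induction on $|E(G)|$, using the deletion-contraction relation of Lemma~\ref{lem:delcon} together with an analogous deletion-contraction relation for the weak chromatic symmetric function. Since $\omega$ is linear, applying $\omega$ to \eqref{eq:delcon} and multiplying by $(-1)^{d-n}$ gives
$$(-1)^{d-n}\omega(X_{(G,w)}) = (-1)^{d-n}\omega(X_{(G\bk e,w)}) - (-1)^{d-n}\omega(X_{(G/e,w/e)}).$$
Now $G\bk e$ and $G$ both have $n$ vertices and total weight $d$, while $G/e$ has $n-1$ vertices and the same total weight $d$ (contraction preserves total weight), and both $G\bk e$ and $G/e$ have fewer edges than $G$. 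So, assuming the theorem for those two graphs and using $(-1)^{d-(n-1)} = -(-1)^{d-n}$, the right-hand side equals $\overline{X}_{(G\bk e,w)} + \overline{X}_{(G/e,w/e)}$. It therefore suffices to prove the base case and the identity
\begin{equation*}
\overline{X}_{(G,w)} = \overline{X}_{(G\bk e,w)} + \overline{X}_{(G/e,w/e)}
\end{equation*}
for every edge $e$. For the base case $E(G)=\emptyset$ there is a unique (empty) orientation, with respect to which every coloring is weakly proper, so $\overline{X}_{(G,w)} = \prod_{v\in V(G)} p_{w(v)} = p_{\lm}$ where $\lm$ lists the vertex weights, while $X_{(G,w)} = X_{\overline{K^{\lm}}} = p_{\lm}$ and $(-1)^{d-n}\omega(p_{\lm}) = p_{\lm}$. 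If $G$ has a loop, both sides of the identity vanish; so when proving the identity we may assume $e=v_1v_2$ joins distinct vertices, with contracted vertex $v^*$.

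To prove the weak deletion-contraction identity I would analyze the map $\phi$ that restricts an acyclic orientation of $G$ to $G\bk e$, sending a pair $(\gamma,\kappa)$ counted by $\overline{X}_{(G,w)}$ to $(\gamma|_{G\bk e},\kappa)$, which is counted by $\overline{X}_{(G\bk e,w)}$ and has the same monomial $\overline{x}_\kappa$. The point is a fiber count. Fix $(\gamma',\kappa)$ with $\gamma'$ an acyclic orientation of $G\bk e$. If $\kappa(v_1)\neq\kappa(v_2)$, then weak properness forces the orientation of $e$, and that forced orientation cannot close a directed cycle (a directed path in $\gamma'$ between $v_1$ and $v_2$ traverses nondecreasing colors, which is incompatible with the forced direction), so the fiber has size $1$. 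If $\kappa(v_1)=\kappa(v_2)$, then both orientations of $e$ keep $\kappa$ weakly proper; since $\gamma'$ is acyclic it cannot contain directed paths both from $v_1$ to $v_2$ and from $v_2$ to $v_1$, so exactly $2$ acyclic extensions exist when $\gamma'$ contains no directed path between $v_1$ and $v_2$, and exactly $1$ otherwise. Hence $\overline{X}_{(G,w)} - \overline{X}_{(G\bk e,w)}$ is the generating function of pairs $(\gamma',\kappa)$ with $\kappa(v_1)=\kappa(v_2)$ and no directed path between $v_1$ and $v_2$ in $\gamma'$.

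The final step identifies that generating function with $\overline{X}_{(G/e,w/e)}$ via the orientation-contraction $\gamma'\mapsto\gamma'_e$ (as defined in this section) and the coloring $\kappa_e$ with $\kappa_e(v^*)=\kappa(v_1)=\kappa(v_2)$. One checks that, under the hypothesis that $\gamma'$ has no directed path between $v_1$ and $v_2$, the orientation $\gamma'_e$ is acyclic: any directed cycle of $\gamma'_e$ either avoids $v^*$, and is then a directed cycle of $\gamma'$, or passes through $v^*$ and lifts to a directed path in $\gamma'$ between vertices in $\{v_1,v_2\}$, which is either a directed cycle of $\gamma'$ or a forbidden path. One also checks that $\kappa_e$ is weakly proper with respect to $\gamma'_e$ and that $\overline{x}_{\kappa_e}(G/e,w/e,\gamma'_e) = \overline{x}_{\kappa}(G\bk e,w,\gamma')$, since $(w/e)(v^*)=w(v_1)+w(v_2)$ and $\kappa(v_1)=\kappa(v_2)$. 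The inverse uncontracts $v^*$ into $v_1,v_2$; because the edge multiset of $G/e$ records which endpoint of $e$ each edge came from, the orientation lifts unambiguously, and the same cycle/path bookkeeping shows the result is an acyclic orientation of $G\bk e$ with no directed path between $v_1$ and $v_2$. I expect this verification — that contracting an orientation with no directed $v_1$-to-$v_2$ path produces an acyclic orientation, and the converse — to be the main obstacle; the rest is linearity of $\omega$ and the deletion-contraction bookkeeping already established for $X_{(G,w)}$.
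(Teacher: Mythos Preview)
Your proof is correct and matches the paper's approach: both induct on $|E(G)|$, handle the edgeless base case directly, and reduce the inductive step to the identity $\overline{X}_{(G,w)} = \overline{X}_{(G\bk e,w)} + \overline{X}_{(G/e,w/e)}$, established by the same case analysis on $\kappa(v_1)$ versus $\kappa(v_2)$ and on directed paths between $v_1$ and $v_2$ (the paper packages this as the termwise balance \eqref{eq:invkappa}, whereas you phrase it as a fiber count for the restriction map followed by a bijection to the contracted graph, and you are more explicit than the paper about the inverse of the orientation-contraction map). One small slip: when $e$ is itself a loop the weak identity can fail (since then $G/e=G\bk e$), so rather than asserting it ``for every edge $e$'' you should simply note that if $G$ has a loop then both sides of \eqref{eq:inv} vanish and no inductive step is needed.
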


    \begin{proof}

      We proceed by induction on the number of edges of $G$.  In the base case, the graph has no edges, and vertices of weights $\lm_1 \geq \dots \geq \lm_k$, say.  Then $\overline{X}_{(G,w)} = X_{(G,w)} = p_{\lm}$, where $\lm = (\lm_1,\dots,\lm_k)$, and since $\omega(p_{\lm}) = (-1)^{|\lm|-l(\lm)}p_{\lm} = (-1)^{d-n}p_{\lm}$, the result follows.

      For the inductive step, we consider $(G,w)$ where $G$ has $m \geq 1$ edges, and assume that \eqref{eq:inv} holds for graphs with $m-1$ or fewer edges.  Let $e = v_1v_2$ be an edge of $G$.  Then from the deletion-contraction relation \eqref{eq:delcon}, we deduce that
      \begin{equation}\label{eq:dn}
(-1)^{d-n}\omega(X_{(G,w)}) = (-1)^{d-n}\omega(X_{(G \bk e, w)}) + (-1)^{d-n-1}\omega(X_{(G / e, w / e)}).
      \end{equation}
      By applying the inductive hypothesis to $(G \bk e, w)$ and $(G / e, w / e)$, it suffices to show that
      \begin{equation}\label{eq:invori}
\overline{X}_{(G,w)} = \overline{X}_{(G \bk e, w)} + \overline{X}_{(G / e, w / e)}.
      \end{equation}

      We extend the definition of $\overline{x}_{\kappa}(G,w,\gamma)$ to include all orientations $\gamma$ and all colorings $\kappa$ by defining that $\overline{x}_{\kappa}(G,w,\gamma) = 0$ if $\gamma$ is not acyclic, or if $\kappa$ is not a weakly proper coloring of $G$ with respect to $\gamma$.  Given an orientation $\gamma$ and coloring $\kappa$ on $(G \bk e, w)$, we also define the following:
      \begin{itemize}\label{it:acyc}
      \item $\gamma_1$ is the orientation of $(G,w)$ with $v_1 \rightarrow v_2$ and all other edges oriented as in $\gamma$.
      \item $\gamma_2$ is the orientation of $(G,w)$ with $v_2 \rightarrow v_1$ and all other edges oriented as in $\gamma$.
      \item If $\kappa(v_1) = \kappa(v_2)$, $\kappa_e$ is the coloring of $(G / e, w / e)$ with $\kappa_e(v^*) = \kappa(v_1)$ where $v^*$ is the vertex created by the contraction of $e$, and for all other vertices $v$, $\kappa_e(v) = \kappa(v)$.
      \item If $\kappa(v_1) \neq \kappa(v_2)$, then $\kappa_e$ does not exist (and so $\overline{x}_{\kappa_e}(G / e, w / e, \gamma_e) = 0$).
      \end{itemize}
      Using these definitions, to show \eqref{eq:invori} it suffices to show the stronger statement that for every acyclic orientation $\gamma$ of $G \bk e$, and every weakly proper coloring $\kappa$ of $(G \bk e, w)$ with respect to $\gamma$, we have
      \begin{equation}\label{eq:invkappa}
\overline{x}_{\kappa}(G,w,\gamma_1) + \overline{x}_{\kappa}(G,w,\gamma_2) = \overline{x}_{\kappa}(G \bk e,w,\gamma) + \overline{x}_{\kappa_e}(G / e,w / e,\gamma_e)
      \end{equation}
      since every summand of $\overline{X}_{(G,w)}$, $\overline{X}_{(G / e, w / e)}$ and $\overline{X}_{(G \bk e, w)}$ is counted exactly once in this way.  Note that each of $\overline{x}_{\kappa}(G,w,\gamma_1)$, $\overline{x}_{\kappa}(G,w,\gamma_2)$, $ \overline{x}_{\kappa}(G \bk e,w,\gamma)$, and $ \overline{x}_{\kappa_e}(G / e,w,\gamma_e)$ is either zero or equal to $\overline{x}_{\kappa}(G \bk e,w,\gamma)$, so it is enough to show that the same number of summands on both sides of \eqref{eq:invkappa} are nonzero.
      
      We split into cases based on whether $\gamma$ has a directed path between $v_1$ and $v_2$ (note that it does not contain both a path from $v_1$ to $v_2$ and one from $v_2$ to $v_1$ since then $\gamma$ would contain an oriented cycle).  Suppose for a contradiction that $\gamma$ contains such a path; without loss of generality we may assume it is from $v_1$ to $v_2$.  Then $\gamma_2$ and $\gamma_e$ both contain oriented cycles in their respective graphs.  However, $\gamma_1$ does not contain an oriented cycle in $(G, w)$.  Furthermore, $\kappa(v_1) \leq \kappa(v_2)$ since $\kappa$ is proper with respect to $\gamma$ in $(G \bk e, w)$ and there is a directed path from $v_1$ to $v_2$, so $\kappa$ is proper with respect to $\gamma_1$ in $(G, w)$.  Thus, \eqref{eq:invkappa} holds in this case.

      Now assume that there is no directed path.  Then all of $\gamma_1$, $\gamma_2$, and $\gamma_e$ are acyclic orientations.  We split into subcases based on $\kappa$.  If $\kappa(v_1) = \kappa(v_2)$, then $\kappa_e$ exists and is proper with respect to $\gamma_e$, and $\kappa$ is proper with respect to all of $\gamma$, $\gamma_1$, and $\gamma_2$, so \eqref{eq:invkappa} holds.  Otherwise, without loss of generality suppose that $\kappa(v_1) < \kappa(v_2)$.  Then $\kappa_e$ does not exist, and $\kappa$ is not proper with respect to $\gamma_2$, but $\kappa$ is proper with respect to $\gamma_1$, so \eqref{eq:invkappa} also holds in this case.  This concludes the proof.
    \end{proof}

    As a corollary, we deduce a further result about the function $\overline{X}_{(G,w)}$ that extends the corresponding result on unweighted graphs from (\cite{stanley}, Theorem 2.7):

    \begin{cor}
      If $(G,w)$ is a vertex-weighted graph with $n$ vertices and total weight $d$, then 
    \begin{equation}\label{eq:ppositive}
    \overline{X}_{(G,w)} = (-1)^{d-n}\omega(X_{(G,w)}) 
    \end{equation}
    is $p$-positive.
    \end{cor}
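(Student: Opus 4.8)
The identity displayed in \eqref{eq:ppositive} is exactly the content of Theorem~\ref{thm:invol}, so the only new assertion is that this common value is $p$-positive, and I would establish this for $\overline{X}_{(G,w)}$ by induction on the number of edges of $G$. The crucial point is that, in contrast with the relation \eqref{eq:delcon} for $X_{(G,w)}$ itself, the orientation recursion obtained inside the proof of Theorem~\ref{thm:invol}, namely \eqref{eq:invori},
$$
\overline{X}_{(G,w)} = \overline{X}_{(G \bk e, w)} + \overline{X}_{(G / e, w / e)},
$$
carries a \emph{plus} sign, so $p$-positivity propagates through it once the base case is known.

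For the base case, if $G$ has no edges and its vertex weights, sorted into weakly decreasing order, form the partition $\lm$, then the only orientation of $G$ is the empty one, every coloring is vacuously weakly proper with respect to it, and hence $\overline{X}_{(G,w)} = X_{(G,w)} = X_{\overline{K^{\lm}}} = p_{\lm}$, which is $p$-positive. I would dispose separately of the case in which $G$ has a loop: then every orientation of $G$ contains the oriented cycle consisting of that single loop, so $G$ has no acyclic orientation and $\overline{X}_{(G,w)} = 0$, which is $p$-positive as well (and consistent, since such a $G$ also has $X_{(G,w)} = 0$).

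For the inductive step, assume $G$ has at least one edge and no loop, and choose any edge $e$, necessarily a non-loop. Both $G \bk e$ and $G / e$ have one fewer edge than $G$, so by the inductive hypothesis $\overline{X}_{(G \bk e, w)}$ and $\overline{X}_{(G / e, w / e)}$ are each $p$-positive; since a sum of $p$-positive symmetric functions is $p$-positive, \eqref{eq:invori} shows that $\overline{X}_{(G,w)}$ is $p$-positive, completing the induction. (Equivalently, one can run the same induction directly on $(-1)^{d-n}\omega(X_{(G,w)})$ by applying $\omega$ to \eqref{eq:delcon}: contraction leaves $d$ fixed while dropping $n$ by one, so the factor $(-1)^{d-n}$ turns the minus sign in \eqref{eq:delcon} into a plus, and the base-case sign cancels because $|\lm|-l(\lm)=d-n$ when $G$ is edgeless.) I do not anticipate a real obstacle here, since the substance is all in Theorem~\ref{thm:invol}; the only pitfall is that \eqref{eq:invori} fails for loop edges, which is why the loop case is handled on its own, and the more naive route of applying $\omega$ to the power-sum expansion \eqref{eq:pbasis} only yields $(-1)^{d-n}\omega(X_{(G,w)}) = \sum_{S \subseteq E(G)} (-1)^{|S|-r(S)} p_{\lm(G,w,S)}$, with $r(S)$ the size of a spanning forest of $(V(G),S)$, an alternating-sign sum whose nonnegativity is not manifest — so the deletion-contraction recursion, where the sign is already absent, is the efficient route.
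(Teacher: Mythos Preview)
Your proposal is correct and follows essentially the same route as the paper: induction on the number of edges, with the base case $\overline{X}_{(G,w)}=p_{\lm}$ for edgeless graphs and the inductive step supplied by the plus-sign recursion \eqref{eq:invori}. The one place you go beyond the paper is in isolating the loop case, which is a genuine (if small) improvement: \eqref{eq:invori} indeed fails when $e$ is a loop, so your separate observation that a graph with a loop has $\overline{X}_{(G,w)}=0$ plugs a gap the paper's proof glosses over.
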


    \begin{proof}

      We proceed by induction on the number of edges.  The base case is a graph with no edges, and as was noted at the beginning of the previous proof, if such a graph $(G,w)$ has vertices of weights $\lm_1 \geq \dots \geq \lm_k$ say, then $\overline{X}_{(G,w)} = p_{\lm}$ where $\lm = (\lm_1,\dots,\lm_k)$, and this is $p$-positive.

      For the inductive step, suppose that $(G,w)$ has $m \geq 1$ edges, and suppose that we have shown that the claim holds for vertex-weighted graphs $(G,w)$ with $m-1$ edges.  Then for any edge $e \in E(G)$, using the inductive hypothesis and the relation \eqref{eq:invori} shows that $\overline{X}_{(G,w)}$ is a sum of two $p$-positive functions, and hence it is $p$-positive, and this concludes the proof.

    \end{proof}

  \end{subsection}

\begin{subsection}{A Formula on Cycles}

  We now prove a modular relation on cycles that was originally proved for unweighted graphs by (\cite{dahl}, Proposition 5):

  \begin{theorem}\label{thm:cycles}

Let $(G, w)$ be a vertex-weighted graph containing a cycle $C$, and let $e$ be a fixed edge of this cycle.  Then 
\begin{equation}\label{eq:cycle}
\sum_{S \subseteq E(C) \bk e} (-1)^{|S|} X_{(G \bk S,w)} = 0.
\end{equation}

  \end{theorem}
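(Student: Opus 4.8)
The plan is to prove \eqref{eq:cycle} by a direct inclusion--exclusion over colorings. The key structural observation is that every graph $G\bk S$ appearing in the sum has the same vertex set $V(G)$ and the same weight function $w$, so a coloring $\kappa\colon V(G)\to\mathbb{N}$ always contributes the same monomial $x_\kappa(G,w)$ to each $X_{(G\bk S,w)}$ for which it is proper. First I would introduce, for each coloring $\kappa$ of $V(G)$, the set $B(\kappa)\subseteq E(G)$ of its monochromatic edges, and note the elementary fact that $\kappa$ is a proper coloring of $G\bk S$ if and only if $B(\kappa)\subseteq S$. Substituting definition \eqref{eq:XGW} into the left-hand side of \eqref{eq:cycle} and interchanging the two summations then yields
$$
\sum_{S\subseteq E(C)\bk e}(-1)^{|S|}X_{(G\bk S,w)}=\sum_{\kappa} x_\kappa(G,w)\sum_{\substack{S\subseteq E(C)\bk e\\ B(\kappa)\subseteq S}}(-1)^{|S|}.
$$

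Next I would evaluate the inner alternating sum. It is empty, hence $0$, unless $B(\kappa)\subseteq E(C)\bk e$; and when $B(\kappa)\subseteq E(C)\bk e$, writing $S=B(\kappa)\sqcup T$ with $T\subseteq(E(C)\bk e)\setminus B(\kappa)$ shows it equals $(-1)^{|B(\kappa)|}\sum_T(-1)^{|T|}$, which vanishes unless $(E(C)\bk e)\setminus B(\kappa)=\emptyset$, that is, unless $B(\kappa)=E(C)\bk e$. So the whole expression collapses to $(-1)^{|E(C)|-1}\sum_{\kappa\,:\,B(\kappa)=E(C)\bk e} x_\kappa(G,w)$, and the final step is to check that this index set is empty. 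Here is the only real content: if every edge of $C$ other than $e$ is monochromatic under $\kappa$, then walking along the path $C\bk e$, whose endpoints are precisely the two endpoints of $e$, forces all the vertices of $C$ to receive a single common colour, so $e$ is monochromatic as well and $e\in B(\kappa)$ --- contradicting $B(\kappa)=E(C)\bk e$. Thus the surviving sum is empty and \eqref{eq:cycle} holds. (This argument also covers the degenerate situations in which $G$ has a loop or $C$ itself is a loop, since then no coloring $\kappa$ even satisfies $B(\kappa)\subseteq E(C)\bk e$.)

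I expect the main --- indeed essentially the only --- obstacle to be that last connectivity observation forcing $e$ to be monochromatic; the rest is routine bookkeeping with signs. As an alternative more in keeping with the rest of the paper, one can instead induct on $|E(C)|$ using the deletion--contraction relation \eqref{eq:delcon}: the base case is $C$ a loop, where $X_{(G,w)}=0$; for the inductive step one fixes an edge $f\in E(C)\bk e$, splits the sum according to whether $f\in S$, and pairs terms to rewrite the left-hand side of \eqref{eq:cycle} as $-\sum_{S\subseteq(E(C)\bk e)\setminus f}(-1)^{|S|}X_{((G/f)\bk S,\,w/f)}$ via \eqref{eq:delcon}, which is $0$ by the inductive hypothesis applied to the length-$(|E(C)|-1)$ cycle $C/f$ in $(G/f,w/f)$. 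In that version the subtlety shifts to checking that contracting $f$ commutes with deleting $S$ (legitimate because $f\notin S$) and that $C/f$ is genuinely a cycle one shorter than $C$ still containing $e$ --- or degenerates to a loop, in which case the corresponding term is $0$ and still matches.
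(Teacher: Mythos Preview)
Your primary argument is correct and genuinely different from the paper's. The paper proves \eqref{eq:cycle} by induction on $|E(C)|$ via deletion--contraction: it applies \eqref{eq:delcon} at an edge $f\neq e$ of $C$, invokes the inductive hypothesis on the shorter cycle $C/f$ in $(G/f,w/f)$, and then uncontracts each resulting term back to $(G\bk S,w)$ using the flexibility of \eqref{eq:uncon}. Your inclusion--exclusion over colorings bypasses all of this: it uses nothing but the definition \eqref{eq:XGW} and the standard alternating-sum identity, together with the single structural fact that the path $C\bk e$ connects the endpoints of $e$. This is shorter and more elementary; the paper's route, on the other hand, is written to showcase the deletion--contraction relation, which is the organizing theme of the section. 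Your sketched alternative is exactly the paper's argument, reorganized slightly (you split the outer sum by $f\in S$ first and then apply \eqref{eq:delcon}, whereas the paper applies \eqref{eq:delcon} to the $S=\emptyset$ term first and unwinds from there); the subtleties you flag --- that deletion of $S$ commutes with contraction of $f$ when $f\notin S$, and that $C/f$ is a shorter cycle through $e$ --- are precisely the points the paper handles. One minor wording issue: your parenthetical about loops is slightly off --- when $C$ is a loop the surviving condition is $B(\kappa)=\emptyset$, and it fails because $e\in B(\kappa)$ for every $\kappa$, not because $B(\kappa)\not\subseteq E(C)\bk e$ --- but the conclusion is the same.
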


  \begin{proof}

    We proceed by induction on the number of edges in the cycle.  The base case of a $1$-edge cycle (a loop) is immediate.

For the inductive step, we assume the claim holds for graphs with an $n$-edge cycle and show that it holds on graphs with an $(n+1)$-edge cycle.  Let $(G,w)$ be a vertex-weighted graph with an $(n+1)$-edge cycle $C$, let $e$ be the edge in the statement of Theorem \ref{thm:cycles}, and let $f = v_1v_2$ be an edge of the cycle with $e \neq f$.  We apply deletion-contraction to the edge $f$ to get
$$X_{(G,w)} = X_{(G \bk f, w)} - X_{(G / f, w / f)}.$$

Let $v^*$ be the vertex of $G / f$ formed by the contraction of $v_1$ and $v_2$.  We now apply the inductive hypothesis to $(G / f, w / f)$, since in this graph $C / f$ is an $n$-edge cycle containing the edge $e$.  We obtain
$$X_{(G,w)} =   X_{(G \bk f, w)} - \sum_{\emptyset \neq S' \subseteq E(C / f) \bk e} (-1)^{|S'|-1} X_{((G / f) \bk S', w / f)}. $$ 

Now, in this sum, for every summand we will uncontract the graph $((G / f) \bk S', w / f)$ to $(G \bk S',w)$, thus obtaining

\begin{align*}
  X_{(G,w)} &= \quad X_{(G \bk f, w)} \\  &- \sum_{\emptyset \neq S' \subseteq E(C) \bk \{e,f\}} (-1)^{|S'|-1} X_{(G \bk (S' \cup \{f\}), w)} \\ &+ \sum_{\emptyset \neq S' \subseteq E(C )\bk \{e,f\}} (-1)^{|S'|-1}X_{(G \bk S', w)}. \stepcounter{equation}\tag{\theequation}\label{eq:S1}
\end{align*}

We claim that the right-hand side of this equation is equal to
\begin{equation}\label{eq:S2}
  \sum_{\emptyset \neq S \subseteq E(C) \bk e} (-1)^{|S|-1} X_{(G \bk S, w)}
\end{equation}
which is sufficient to complete the proof.

The term $X_{(G \bk f,w)}$ of \eqref{eq:S1} is equal to the term of \eqref{eq:S2} corresponding to $S = \{f\}$.  The subtracted sum 
$$-\sum_{\emptyset \subsetneq S' \subseteq E(C) \bk \{e,f\}} (-1)^{|S'|-1} X_{(G \bk (S' \cup \{f\}), w)}$$
in \eqref{eq:S1} is equal to the sum of those terms of \eqref{eq:S2} corresponding to sets $S = \{f\} \cup S'$ with $S' \neq \emptyset$.  Finally, the sum
$$
\sum_{\emptyset \subsetneq S' \subseteq E(C )\bk \{e,f\}} (-1)^{|S'|-1}X_{(G \bk S', w)}
$$
of \eqref{eq:S1} is equal to the sum of the terms of \eqref{eq:S2} corresponding to sets $S = S'$ where $S'$ is a nonempty subset of $C \backslash \{e,f\}$.

  \end{proof}

  \end{subsection}

\end{section}

\begin{section}{Acyclic Orientations}\label{acyc}

Let $a(G)$ denote the number of acyclic orientations of a graph $G$. In terms of deletion-contraction, for any edge $e \in E(G)$ that is not a loop, it is easy to check that
  \begin{equation}\label{eq:sumacyc}
    a(G) = a(G \backslash e) + a(G / e)
  \end{equation}
It can be shown, either by using \eqref{eq:sumacyc} and induction, or using a chromatic polynomial version of \eqref{eq:invori} as in \cite{acyc}, that if $G$ is a graph on $n$ vertices, then 

  \begin{equation}\label{eq:chi}
    a(G) = (-1)^n\chi_G(-1)
  \end{equation}
Additionally, if $\gamma$ is an orientation of a graph $G$, we call a vertex $v \in V(G)$ a \emph{sink} of $\gamma$ if $v$ is not the tail of any edge of $\gamma$.  Then \eqref{eq:chi} is generalized by the following theorem:

  \begin{theorem}{(\cite{stanley}, Theorem 3.3)}

  Let $G$ be an unweighted graph.  We write its chromatic symmetric function in the elementary symmetric function basis as 
$$X_G = \sum_{\lm} c_{\lm}e_{\lm}.$$
Then 
\begin{equation}\label{eq:sinkmain}
a(G) = \sum_{\lm} c_{\lm}.
\end{equation}
Furthermore, as a refinement, define $a_m(G)$ to be the number of acyclic orientations of $G$ having exactly $m$ sinks.  Then
\begin{equation}\label{eq:sinks}
  a_m(G) = \sum_{l(\lm) = m} c_{\lm}.
\end{equation}
That is, $a_m(G)$ is given by the sum of those $c_{\lm}$ corresponding to partitions $\lm$ with exactly $m$ parts.

\end{theorem}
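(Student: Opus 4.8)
The plan is to deduce the theorem from a vertex-weighted generalization proved by deletion-contraction. Fix a formal parameter $q$ and let $\varphi_q : \Lambda \to \mathbb{R}[q]$ be the ring homomorphism with $\varphi_q(e_n) = q$ for every $n \geq 1$; this is well-defined since $\Lambda = \mathbb{R}[e_1, e_2, \dots]$ with the $e_i$ algebraically independent. Then $\varphi_q(e_{\lm}) = q^{l(\lm)}$, so if $X_{(G,w)} = \sum_{\lm} c_{\lm} e_{\lm}$ then $\varphi_q(X_{(G,w)}) = \sum_{m} \big(\sum_{l(\lm) = m} c_{\lm}\big) q^m$. For an acyclic orientation $\gamma$ of $G$ let $\mathrm{sink}(\gamma)$ denote its set of sinks, and set
\[ A_q(G,w) = \sum_{\gamma} \prod_{v \in \mathrm{sink}(\gamma)} \big(1 - (1-q)^{w(v)}\big), \]
the sum ranging over all acyclic orientations of $G$. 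I would prove that $\varphi_q(X_{(G,w)}) = (-1)^{d-n} A_q(G,w)$. Granting this, taking $w \equiv 1$ makes each factor in $A_q$ equal to $q$, so $A_q(G,w) = \sum_m a_m(G) q^m$ and (since then $d = n$) comparing coefficients of $q^m$ recovers \eqref{eq:sinks}; setting moreover $q = 1$ recovers \eqref{eq:sinkmain}.

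To prove $\varphi_q(X_{(G,w)}) = (-1)^{d-n}A_q(G,w)$ I would induct on $|E(G)|$. In the base case $G$ is edgeless, so $X_{(G,w)} = \prod_{v \in V(G)} p_{w(v)}$ and hence $\varphi_q(X_{(G,w)}) = \prod_v \varphi_q(p_{w(v)})$; a routine computation with Newton's identities (or with $\sum_n e_n t^n = \prod_i(1 + x_i t)$) gives $\varphi_q(p_k) = (-1)^{k-1}(1 - (1-q)^k)$, and since the unique orientation of an edgeless graph has every vertex as a sink, this product equals $(-1)^{d-n} A_q(G,w)$. For the inductive step: if the chosen edge $e$ is a loop both sides vanish, so assume $e = v_1 v_2$ with $v_1 \neq v_2$. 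Applying $\varphi_q$ to \eqref{eq:delcon} and invoking the inductive hypothesis for $(G \bk e, w)$ and $(G / e, w / e)$ — noting that contraction preserves the total weight $d$ while lowering the vertex count to $n-1$, which flips the sign — reduces everything to the purely combinatorial identity
\[ A_q(G,w) = A_q(G \bk e, w) + A_q(G / e, w / e). \]

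Proving this identity is the step I expect to be the crux. The scaffold is the standard bijective proof of $a(G) = a(G \bk e) + a(G/e)$ behind \eqref{eq:sumacyc}: restricting an acyclic orientation of $G$ to $G \bk e$ gives a surjection onto the acyclic orientations of $G \bk e$ whose fibre over $\gamma'$ has one element if $\gamma'$ contains a directed path between $v_1$ and $v_2$, and two elements (the two orientations of $e$) otherwise, and the two-element fibres correspond bijectively, via contraction, to the acyclic orientations of $G / e$. What remains is to track sinks through this correspondence. When $\gamma'$ has a directed $v_1$--$v_2$ path, its unique extension to $G$ has the same sink set as $\gamma'$, so its contribution to $A_q(G,w)$ equals the contribution of $\gamma'$ to $A_q(G \bk e, w)$. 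When $\gamma'$ has no such path, let $\gamma_1, \gamma_2$ be the two extensions to $G$ and $\widehat{\gamma'}$ the corresponding acyclic orientation of $G / e$ (with contracted vertex $v^*$ of weight $w(v_1) + w(v_2)$); the key observations are that $v^*$ is a sink of $\widehat{\gamma'}$ precisely when both $v_1$ and $v_2$ are sinks of $\gamma'$, that the other sinks are common to all of these orientations, and that $g(a) := 1 - (1-q)^a$ satisfies the functional equation $1 - g(a + b) = (1 - g(a))(1 - g(b))$. Writing $P$ for the product of $g(w(v))$ over the sinks $v$ of $\gamma'$ other than $v_1, v_2$, a short case analysis (on which of $v_1, v_2$ are sinks of $\gamma'$) then shows that the combined contribution of $\gamma_1$ and $\gamma_2$ to $A_q(G,w)$ equals the contribution of $\gamma'$ to $A_q(G \bk e, w)$ plus that of $\widehat{\gamma'}$ to $A_q(G/e, w/e)$ in every case; summing over all $\gamma'$ yields the identity. (It is worth noting at the outset that $X$, $a(\cdot)$, and $A_q$ are all unaffected by collapsing multi-edges, so one may assume $G$ is simple wherever convenient.)
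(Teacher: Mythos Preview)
Your proof is correct and follows the same overall strategy as the paper: prove a vertex-weighted generalization by induction on the number of edges via the deletion-contraction relation \eqref{eq:delcon}, then specialize to $w \equiv 1$. The paper's generalization is Theorem~\ref{thm:elm}, which introduces combinatorial \emph{sink maps} $S : Sink(\gamma) \to 2^{\mathbb{N}}$ and proves, for each fixed $m$, that $\sigma_m(X_{(G,w)}) = (-1)^{d-n}\sum_{swt(\gamma,S)=m}(-1)^{m-sink(\gamma)}$; the inductive step is a case analysis (Cases~1--4, with subcases) tracking whether $S_0$ is ``$\gamma$-admissible'' for each of $\gamma_0,\gamma_{v_1},\gamma_{v_2},\gamma_{v^*}$. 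Your formulation packages the same content into a generating function: expanding $A_q(G,w)$ over nonempty subsets of $\{1,\dots,w(v)\}$ for each sink $v$ shows that $A_q(G,w) = \sum_{(\gamma,S)} (-1)^{swt(S)-sink(\gamma)} q^{swt(S)}$, so your identity $\varphi_q(X_{(G,w)}) = (-1)^{d-n}A_q(G,w)$ is exactly the $q$-generating-function form of Theorem~\ref{thm:elm}. What your packaging buys is a shorter inductive step: the functional equation $(1-g(a))(1-g(b)) = 1-g(a+b)$ for $g(a)=1-(1-q)^a$ dispatches the ``both $v_1,v_2$ sinks'' case in one line, whereas the paper's coefficient-by-coefficient argument must treat the subcases of which of $S_0(v_1),S_0(v_2)$ are empty. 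The paper's version, on the other hand, gives a more explicit combinatorial interpretation of each individual coefficient $\sigma_m$.
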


Notably, the proof method of \cite{stanley} uses a novel algebraic argument that does not generalize directly either the argument of \cite{acyc} or the inductive method suggested by \eqref{eq:sumacyc}.

We will prove a generalization for vertex-weighted graphs using induction and the deletion-contraction relation.  In this way, we also provide an alternate proof of \eqref{eq:sinks} that is a natural extension of enumerative proofs of \eqref{eq:chi}.

We first establish some notation and terminology.  For a symmetric function $f$, if $f = \sum_{\lm} c_{\lm}e_{\lm}$ is its expansion in the basis of elementary symmetric functions, we define $\sigma(f) = \sum_{\lm} c_{\lm}$, and $\sigma_m(f) = \sum_{l(\lm) = m} c_{\lm}$.

For an acyclic orientation $\gamma$ of a vertex-weighted graph $(G,w)$, we define $Sink(\gamma)$ to be the set of sinks of $G$ with respect to $\gamma$ (note that as $\gamma$ is acyclic, $Sink(\gamma)$ is always nonempty).  Let $sink(\gamma) = |Sink(\gamma)|$.  Define a \emph{sink map} $S$ \emph{of} $\gamma$ to be a function $S : Sink(\gamma) \rightarrow 2^{\mathbb{N}}$ such that for all $v \in Sink(\gamma)$, $\emptyset \neq S(v) \subseteq \{1,2,\dots,w(v)\}$.  Given a sink map $S$ of an acyclic orientation $\gamma$ on a vertex-weighted graph $(G,w)$, we define its \emph{sink weight} to be $swt(G,w,\gamma,S) = \sum_{v \in Sink(\gamma)} |S(v)|$.  When $(G,w)$ and/or $\gamma$ are clear from context we may use $swt(S)$ or $swt(\gamma, S)$ in place of $swt(G,w,\gamma,S)$ for brevity.

We now state the main theorem of this section:

\begin{theorem}\label{thm:elm}

  Let $(G,w)$ be a vertex-weighted graph with $n$ vertices and total weight $d$.  Then
  $$
 \sigma(X_{(G,w)}) = (-1)^{d-n}\sum_{(\gamma,S)} (-1)^{swt(S)-sink(\gamma)}
 $$
 where the sum runs over all ordered pairs $(\gamma,S)$ such that $\gamma$ is an acyclic orientation of $(G,w)$, and $S$ is a sink map of $\gamma$.  Additionally,
\begin{equation}\label{eq:acyc}
  \sigma_m(X_{(G,w)}) = (-1)^{d-n}\sum_{swt(\gamma, S)=m} (-1)^{m-sink(\gamma)}
\end{equation}
where the sum ranges only over those ordered pairs $(\gamma, S)$ with $swt(S) = m$.

\end{theorem}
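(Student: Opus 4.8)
The plan is to prove the single refined identity
\[
\sigma_t\big(X_{(G,w)}\big) = (-1)^{d-n}\,\tau_t(G,w),
\]
where for $f=\sum_\lambda c_\lambda e_\lambda$ I write $\sigma_t(f)=\sum_\lambda c_\lambda t^{l(\lambda)}$ (so $\sigma(f)=\sigma_t(f)|_{t=1}$ and $\sigma_m(f)$ is the coefficient of $t^m$ in $\sigma_t(f)$), and
\[
\tau_t(G,w) = \sum_{(\gamma,S)} (-1)^{swt(S)-sink(\gamma)}\, t^{swt(S)},
\]
the sum ranging over acyclic orientations $\gamma$ of $G$ and sink maps $S$ of $\gamma$; both displayed equations of the theorem are then immediate specializations. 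Two preliminary observations make this tractable. First, since the $e_n$ are algebraically independent generators of $\Lambda$ and $e_\lambda e_\mu = e_{\lambda\cup\mu}$ with $l(\lambda\cup\mu)=l(\lambda)+l(\mu)$, the map $\sigma_t$ is the $\mathbb{R}$-algebra homomorphism $\Lambda\to\mathbb{R}[t]$ with $\sigma_t(e_n)=t$ for all $n\geq1$; in particular $\sigma_t$ is multiplicative. Second, performing the inner sum over all sink maps of a fixed $\gamma$ (a product over $v\in Sink(\gamma)$ of binomial sums $\sum_{\emptyset\neq A\subseteq\{1,\dots,w(v)\}}(-t)^{|A|}=(1-t)^{w(v)}-1$) collapses the definition to
\[
\tau_t(G,w) = \sum_{\gamma}\ \prod_{v\in Sink(\gamma)}\big(1-(1-t)^{w(v)}\big),
\]
still summed over acyclic orientations $\gamma$ of $G$.

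The proof is then by induction on the number of edges. If $G$ has a loop, $X_{(G,w)}=0$ and $G$ has no acyclic orientation, so both sides vanish and we may assume $G$ is loopless. In the base case $G$ has no edges, so $X_{(G,w)}=p_\lambda$ with $\lambda$ the partition of vertex weights, while the unique (trivial) orientation, in which every vertex is a sink, gives $\tau_t(G,w)=\prod_i\big(1-(1-t)^{\lambda_i}\big)$. By multiplicativity it thus suffices to check $\sigma_t(p_n)=(-1)^{n-1}\big(1-(1-t)^n\big)$, which I would obtain by applying $\sigma_t$ coefficientwise to the generating-function identity $E'(y)=E(y)\sum_{k\geq1}(-1)^{k-1}p_k y^{k-1}$ (with $E(y)=\sum_n e_n y^n$), solving for $\sigma_t$ of the second factor, and extracting coefficients via partial fractions. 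Then $\sigma_t(p_\lambda)=\prod_i(-1)^{\lambda_i-1}\big(1-(1-t)^{\lambda_i}\big)=(-1)^{d-n}\prod_i\big(1-(1-t)^{\lambda_i}\big)$, as required.

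For the inductive step, pick an edge $e=v_1v_2$. Applying $\sigma_t$ to the deletion-contraction relation \eqref{eq:delcon} and the inductive hypothesis to $(G\backslash e,w)$ (which has $n$ vertices) and to $(G/e,w/e)$ (which has $n-1$ vertices) reduces the claim to the purely combinatorial identity
\[
\tau_t(G,w)=\tau_t(G\backslash e,w)+\tau_t(G/e,w/e).
\]
I would prove this by grouping acyclic orientations according to the restriction $\gamma$ of an orientation to $G\backslash e$, exactly as in the proof of Theorem \ref{thm:invol}. If $\gamma$ contains a directed path between $v_1$ and $v_2$, it has a unique acyclic extension $\gamma_1$ to $G$, with $Sink(\gamma_1)=Sink(\gamma)$, and its contraction to $G/e$ is not acyclic, so its contributions to both sides match trivially. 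If $\gamma$ contains no such path, it has two acyclic extensions $\gamma_1,\gamma_2$ to $G$ with $Sink(\gamma_i)=Sink(\gamma)\backslash\{v_i\}$, and its contraction $\bar\gamma$ is an acyclic orientation of $G/e$ (this gives a bijection between such $\gamma$ and acyclic orientations of $G/e$) in which the contracted vertex $v^*$ is a sink precisely when both $v_1$ and $v_2$ are sinks of $\gamma$; writing $f_w(v)=1-(1-t)^{w(v)}$, the needed contribution identity reduces, in the one nontrivial subcase $v_1,v_2\in Sink(\gamma)$, to $f_w(v_1)+f_w(v_2)=f_w(v_1)f_w(v_2)+\big(1-(1-t)^{w(v_1)+w(v_2)}\big)$, i.e.\ the elementary identity $(1-a)+(1-b)=(1-a)(1-b)+(1-ab)$ with $a=(1-t)^{w(v_1)}$, $b=(1-t)^{w(v_2)}$.

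I expect the main obstacle to be the bookkeeping in this last step: tracking precisely how the sink set changes under each extension of $\gamma$ and under contraction, organizing the subcases according to which of $v_1,v_2$ lie in $Sink(\gamma)$, and confirming that ``acyclic orientations of $G\backslash e$ with no directed path between $v_1$ and $v_2$'' correspond bijectively to acyclic orientations of $G/e$ even when contraction creates multi-edges. The surrounding algebra (both the computation of $\sigma_t(p_n)$ and the final identity) is routine; the content lies entirely in getting this correspondence and the sink-set recursion exactly right.
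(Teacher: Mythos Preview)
Your proposal is correct and follows the same overall architecture as the paper's proof: induction on the number of edges, with the inductive step reducing via deletion--contraction to a combinatorial recursion for the right-hand side, and the same splitting into ``directed path vs.\ no directed path'' cases borrowed from the proof of Theorem~\ref{thm:invol}.

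Where you genuinely differ is in the packaging, and it buys a real simplification. The paper keeps the sink maps $S$ explicit throughout, extends them to ``$\gamma$-admissible'' maps defined on all vertices, and then verifies the termwise identity \eqref{eq:terms} through four cases and several subcases depending on which of $v_1,v_2$ are sinks and which of $S_0(v_1),S_0(v_2)$ are empty. You instead sum over $S$ first, collapsing the contribution of each acyclic orientation $\gamma$ to the closed form $\prod_{v\in Sink(\gamma)}\big(1-(1-t)^{w(v)}\big)$; the entire inductive case analysis then shrinks to the single algebraic identity $(1-a)+(1-b)=(1-a)(1-b)+(1-ab)$ in the one nontrivial subcase. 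Likewise, for the base case the paper derives $\sigma_m(p_a)=(-1)^{a-m}\binom{a}{m}$ by a separate induction on $m$ using Newton's identity, whereas your generating-function computation of $\sigma_t(p_n)=(-1)^{n-1}\big(1-(1-t)^n\big)$ from $E'(y)/E(y)$ gives the same content in one stroke. The paper's approach is more hands-on and makes the enumerative content visible at the level of individual pairs $(\gamma,S)$; yours is shorter and makes clearer why the recursion holds. The one point you flag---that acyclic orientations of $G\backslash e$ with no directed $v_1$--$v_2$ path biject with acyclic orientations of $G/e$, including when contraction creates multi-edges or (if $e$ is itself part of a multi-edge) a loop---is standard and does go through, so there is no real obstacle there.
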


\begin{proof}

  It suffices to prove \eqref{eq:acyc}.  We proceed by induction on the number of edges of $(G,w)$.  The base case is a vertex-weighted graph with no edges.  If such a graph has vertices of weights $\lm_1 \geq \dots \geq \lm_k$, then $X_{(G,w)} = p_{\lm}$ where $\lm = (\lm_1,\dots,\lm_k)$.

  First, we establish the following identity for any positive integer $a$:
  \begin{equation}\label{eq:minibinom}
  \sigma_m(p_a) = (-1)^{a-m}\binom{a}{m}
  \end{equation}

  We show this for fixed $a$ by induction on $m$, making use of Newton's identity (\cite{mac}, Chapter 1.2):
  $$
  p_a = (-1)^{a-1}ae_a + \sum_{i=1}^{a-1} (-1)^{a-1+i}e_{a-i}p_i.
  $$
  The case $m = 1$ is clear from this.  Now we assume the claim holds for $m = b-1$ and prove it for $m = b$.  Using Newton's identity followed by the inductive hypothesis we have
  \begin{align*}
    \sigma_b(p_a) &= \sum_{i=1}^{a-1} (-1)^{a-1+i}\sigma_{b-1}(p_i) = \sum_{i=1}^{a-1} (-1)^{a-1+i}(-1)^{i-b+1}\binom{i}{b-1} \\
    &= (-1)^{a-b}\sum_{i=1}^{a-1}\binom{i}{b-1} = (-1)^{a-b}\binom{a}{b}
  \end{align*}
  where we have used the Hockey Stick Identity.

   We now establish the base case for the induction of the main proof.  Recall that $(G,w)$ has vertices of weights $\lm_1 \geq \dots \geq \lm_k$ and no edges.  First we evaluate directly the left-hand side of \eqref{eq:acyc}:
  $$
  \sigma_m(X_{(G,w)}) = \sigma_m(p_{(\lm_1,\dots,\lm_k)}) = \sum_{(a_1,\dots,a_k)} \sigma_{a_1}(p_{\lm_1}) \cdots \sigma_{a_k}(p_{\lm_k})
  $$
  where this sum runs over all tuples $(a_1,\dots,a_k)$ of positive integers satisfying $a_i \leq \lm_i$ and \\ $a_1 + \cdots + a_k = m$.  Expanding using \eqref{eq:minibinom}, we get
  \begin{equation}\label{eq:binom}
  \sum_{(a_1,\dots,a_k)} \prod_{i=1}^k (-1)^{\lm_i-a_i}\binom{\lm_i}{a_i} = \sum_{(a_1,\dots,a_k)} (-1)^{|\lm|-m} \prod_{i=1}^k \binom{\lm_i}{a_i}.
  \end{equation}

  Next we will simplify the right-hand side of \eqref{eq:acyc} and show that it is equal to \eqref{eq:binom}.  In $(G,w)$ there is only one acyclic orientation $\gamma$, the empty orientation, and all vertices are sinks in this orientation, so equivalently we are looking for all ways to choose the sink map $S$ such that $swt(S) = m$.  Then the sum simplifies to 
  $$
(-1)^{d-n}\sum_{swt(\gamma, S)=m} (-1)^{m-sink(\gamma)} = (-1)^{|\lm|-k}(-1)^{m-k}\sum_{(a_1,\dots,a_k)} \prod_{i=1}^k \binom{\lm_i}{a_i}
    $$
  where the sum runs over the same tuples as in \eqref{eq:binom}.  Clearly these sums are equal, and this establishes the base case.

  We now show the inductive step.  Let $(G,w)$ be a vertex-weighted graph with $g \geq 1$ edges, and assume that \eqref{eq:acyc} holds for all vertex-weighted graphs with fewer than $g$ edges.  We may assume that $(G,w)$ has no loops, as otherwise both sides of \eqref{eq:acyc} are $0$.  Let $e$ be an edge of $(G,w)$, with endpoints $v_1$ and $v_2$.  In $(G / e, w / e)$, let $v^*$ be the vertex arising from the contraction of $v_1$ and $v_2$.  Taking the deletion-contraction relation \eqref{eq:edgeadd}, applying $\sigma_m$ to both sides, and multiplying both sides by $(-1)^{d-n}$ we have
  $$
(-1)^{d-n}\sigma_m(X_{(G \backslash e,w)}) = (-1)^{d-n}\sigma_m(X_{(G,w)}) - (-1)^{d-n-1}\sigma_m(X_{(G / e, w / e)}).
  $$
  By the inductive hypothesis
  $$
(-1)^{d-n}\sigma_m(X_{(G \backslash e,w)}) = \sum_{swt(G \bk e,\gamma,S)=m} (-1)^{m-sink(\gamma)}
  $$
  and
  $$
(-1)^{d-n-1}\sigma_m(X_{(G / e, w / e)}) = \sum_{swt(G/e,\gamma,S)=m} (-1)^{m-sink(\gamma)}.
  $$
  To finish the proof it suffices to show that
  $$
\sum_{swt(G \bk e, \gamma,S)=m} (-1)^{m-sink(\gamma)} = \sum_{swt(G,\gamma,S)=m} (-1)^{m-sink(\gamma)} - \sum_{swt(G/e,\gamma,S)=m} (-1)^{m-sink(\gamma)}
$$
or after multiplying both sides by $(-1)^m$,
\begin{equation}\label{eq:sink}
 \sum_{swt(G \bk e,\gamma,S)=m} (-1)^{sink(\gamma)} = \sum_{swt(G,\gamma,S)=m} (-1)^{sink(\gamma)} - \sum_{swt(G/e,\gamma,S)=m} (-1)^{sink(\gamma)}.
\end{equation}

To prove \eqref{eq:sink}, we will work over a larger class of maps $S$ whose domain is the set of all vertices of a graph instead of just the sinks of a given acyclic orientation $\gamma$, and we also allow $S(v) = \emptyset$ for all vertices $v$, while still requiring that $S(v) \subseteq \{1,2,\dots,w(v)\}$.  We call $S$ \emph{$\gamma$-admissible} if $S(v) \neq \emptyset$ if and only if $v \in Sink(\gamma)$.  Thus we may rephrase \eqref{eq:sink} by allowing $\gamma$ and $S$ in the summations to range over all acyclic orientations $\gamma$ and all sink maps $S$ with $S(v) \subseteq \{1,2,\dots,w(v)\}$ for all $v$, but where we define the corresponding summand to be $(-1)^{sink(\gamma)}$ if and only if $S$ is $\gamma$-admissible, and $0$ otherwise.

We show that for every acyclic orientation $\gamma_0$ of $G \backslash e$, and every map $S_0 : V(G) \rightarrow 2^{\mathbb{N}}$ with $S_0(v) \subseteq \{1,2,\dots,w(v)\}$ such that $\sum_{v \in V(G)} |S_0(v)| = m$, the equation \eqref{eq:sink} is satisfied when summing over those $\gamma$ and $S$ where
 \begin{itemize}
 \item in $G \backslash e$, $\gamma = \gamma_0$ and $S = S_0$.
 \item in $G$, $\gamma$ restricted to $G \backslash e$ is $\gamma_0$, and $S = S_0$.  This yields two choices for $\gamma$ depending on the orientation of the edge $v_1v_2$.  Let $\gamma_{v_1}$ be the one where $v_1 \rightarrow v_2$, and $\gamma_{v_2}$ the one where $v_2 \rightarrow v_1$.
 \item in $G / e$, $\gamma = \gamma_{v^*}$ is the contraction of $\gamma_0$, and $S = S'$ is defined by $S'(v) = S_0(v)$ if $v \neq v^*$, and $S'(v^*) = S(v_1) \cup \{w(v_1)+i : i \in S(v_2)\}$.
 \end{itemize}

 It is easy to check that every pair $(\gamma,S)$ for each of $G \backslash e$, $G$, and $G / e$ is derived from exactly one such $(\gamma_0,S_0)$, so proving this claim will finish the proof of the theorem.

 For ease of notation, we fix $\gamma_0$ and $S_0$ in what follows.   Let $T(G \bk e)$ denote the term corresponding to $\gamma_0$ and $S_0$ in the summation for $G \bk e$ in \eqref{eq:sink}, and likewise let $T(G / e)$ denote the term in the summation for $G / e$ corresponding to $\gamma_{v_*}$ and $S'$.  Let $T(G_{v_1})$ denote the term in the summation for $G$ corresponding to $S_0$ and $\gamma_{v_1}$, and likewise for $T(G_{v_2})$.  Thus what we must show for every fixed $\gamma_0$ and $S_0$ is
 \begin{equation}\label{eq:terms}
T(G \bk e) = T(G_{v_1}) + T(G_{v_2}) - T(G / e)
 \end{equation}

 We proceed by cases:
 
 \vspace{0.5cm}
 
 \textbf{Case 1:  $\gamma_0$ has a directed path from $v_1$ to $v_2$ or from $v_2$ to $v_1$}

 \vspace{0.5cm}
 
 Note that $\gamma_0$ cannot have both of these directed paths since $\gamma_0$ is acyclic.  Without loss of generality we assume the path is from $v_2$ to $v_1$. Then $T(G / e) = 0$ because $\gamma_{v^*}$ is not acyclic.  Also $T(G \bk e) = (-1)^{sink(\gamma_0)}$ if $S_0$ is $\gamma_0$-admissible, and $0$ otherwise.  The orientation $\gamma_{v_1}$ is not acyclic, so $T(G_{v_1}) = 0$.  However, $\gamma_{v_2}$ is acyclic, and has the same set of sinks as $\gamma_0$, so also $T(G_{v_2}) = (-1)^{sink(\gamma_0)}$ if $S_0$ is $\gamma_0$-admissible and $0$ otherwise, and this satisfies \eqref{eq:terms}.

\vspace{0.5cm}

 Note that from now on, since we may assume there is no directed path between $v_1$ and $v_2$ in $\gamma_0$, the orientation $\gamma_{v^*}$ is acyclic.

 \vspace*{0.5cm}

 \textbf{Case 2: Neither $v_1$ nor $v_2$ is a sink with respect to $\gamma_0$}

 \vspace{0.5cm}

 In this case, $v_1$ and $v_2$ are also not sinks in $\gamma_{v_1}$ or $\gamma_{v_2}$, and $v^*$ is not a sink in $\gamma_{v^*}$, so if it is not the case that $S_0(v_1) = S_0(v_2) = \emptyset$, all terms of \eqref{eq:terms} are equal to $0$.  Otherwise, all terms are equal to $1$.  In either case, \eqref{eq:terms} is satisfied.

 \vspace{0.5cm}

 \textbf{Case 3: Exactly one of $v_1$ or $v_2$ is a sink with respect to $\gamma_0$}

 \vspace{0.5cm}

 Without loss of generality we may assume that $v_1$ is a sink; the case where $v_2$ is a sink is analogous.  Similarly to the previous case, if $S_0(v_2) \neq \emptyset$ then all terms of \eqref{eq:terms} are equal to $0$ (note that in $\gamma_{v^*}$, vertex $v^*$ is a sink if and only if both $v_1$ and $v_2$ are).  Thus, we may assume that $S_0(v_2) = \emptyset$.  We have two subcases to consider:

 \vspace{0.5cm}

 \textbf{Case 3.1: $S_0(v_1) = \emptyset$}

 \vspace{0.5cm}

 In this case $T(G \bk e) = 0$ as $S_0$ is not $\gamma_0$-admissible.  Additionally, $T(G_{v_2}) = 0$ as $S_0$ is not $\gamma_{v_2}$-admissible.  However, $S_0$ is $\gamma_{v_1}$-admissible since $v_1$ is no longer a sink in $\gamma_{v_1}$, so $T(G_{v_1}) = (-1)^{sink(\gamma_0)-1}$.  Also, as $v^*$ is not a sink in $\gamma_{v^*}$, we have $T(G / e) =  (-1)^{sink(\gamma_0)-1}$.  Thus, this case satisfies \eqref{eq:terms}.

 \vspace{0.5cm}

 \textbf{Case 3.2: $S_0(v_1) \neq \emptyset$}

 \vspace{0.5cm}

 Then $S_0$ is $\gamma_0$-admissible, so $T(G \bk e) = (-1)^{sink(\gamma_0)}$.  Also $S_0$ is $\gamma_{v_2}$-admissible, but it is not $\gamma_{v_1}$-admissible, so $T(G_{v_2}) = (-1)^{sink(\gamma_0)}$ and $T(G_{v_1}) = 0$.  The map $S'$ is not $\gamma_{v^*}$-admissible, since $v^*$ is not a sink, but $S'(v^*) \neq \emptyset$ since $S_0(u) \neq \emptyset$, so $T(G / e) = 0$.  This satisfies \eqref{eq:terms}.

  \vspace{0.5cm}

 \textbf{Case 4: Both $v_1$ and $v_2$ are sinks with respect to $\gamma_0$}

 \vspace{0.5cm}

 If $S_0(v_1) = S_0(v_2) = \emptyset$, then all terms of \eqref{eq:terms} are equal to $0$.  Thus we may assume that at least one of these sets is nonempty.  We again split into subcases.

 \vspace{0.5cm}

 \textbf{Case 4.1: Exactly one of $S_0(v_1)$ and $S_0(v_2)$ is nonempty}

 \vspace{0.5cm}

 Without loss of generality we may assume that $S_0(v_1) \neq \emptyset$ and $S_0(v_2) = \emptyset$; the other case is analogous.  Then $S_0$ is not $\gamma_0$-admissible, so $T(G \bk e) = 0$.  Also, $S_0$ is not $\gamma_{v_1}$-admissible, so $T(G_{v_1}) = 0$.  However, $S_0$ is $\gamma_{v_2}$-admissible since here $v_2$ is no longer a sink, so $T(G_{v_2}) = (-1)^{sink(\gamma_0)-1}$.  In $\gamma_{v^*}$, the contracted vertex $v^*$ is a sink and $S'(v^*)$ is nonempty, so $T(G / e) = (-1)^{sink(\gamma_0)-1}$ since the two sinks $v_1$ and $v_2$ became one sink.  This satisfies \eqref{eq:terms}.

 \vspace{0.5cm}

 \textbf{Case 4.2: Both $S_0(v_1)$ and $S_0(v_2)$ are nonempty}

 \vspace{0.5cm}

 In this case $S_0$ is $\gamma_0$-admissible, so $T(G \bk e) = (-1)^{sink(\gamma_0)}$.  However, $S_0$ is neither $\gamma_{v_1}$-admissible nor $\gamma_{v_2}$-admissible, so $T(G_{v_1}) = T(G_{v_2}) = 0$.  In $\gamma_{v^*}$, the contracted vertex $v^*$ is a sink, and $S'(v^*)$ is nonempty, so  $T(G / e) = (-1)^{sink(\gamma_0)-1}$, since the two sinks $v_1$ and $v_2$ became one sink.  This satisfies \eqref{eq:terms}.

 \vspace{0.5cm}

 Thus we have shown that \eqref{eq:terms} holds in all cases, and this finishes the proof. 
 
\end{proof}

\end{section}

\begin{section}{Further Applications}

  Considering vertex-weighted graphs with the chromatic symmetric function provides a new perspective and new tools for approaching major unsolved problems.  We mention some of these problems and possible approaches.

\begin{subsection}{Chromatic Quasisymmetric Functions}

  One well-researched generalization of the chromatic symmetric function is the chromatic quasisymmetric function of Shareshian and Wachs \cite{wachs}, defined on vertex-labeled graphs, or equivalently on graphs equipped with an acyclic orientation.  In the context of finding a deletion-contraction relation it is more natural to look at the generalization of this function to simple graphs with an arbitrary orientation, considered by Ellzey \cite{ell} and Alexandersson and Panova \cite{per}.  Given a graph $G$ with a fixed orientation $\gamma$, for any proper coloring $\kappa$ of $G$ define the \emph{ascent number} $asc(\kappa)$ to be the number of edges $v_1 \rightarrow v_2$ of $\gamma$ such that $\kappa(v_1) < \kappa(v_2)$.  Using the notation $x_{\kappa}(G) = \prod_{v \in V(G)} x_{\kappa(v)}$, define the \emph{chromatic quasisymmetric function of} $G$ \emph{with respect to} $\gamma$ as

  \begin{equation}\label{eq:quasi}
X_{(G,\gamma)}(q,x_1,x_2,\dots) = \sum_{\kappa} x_{\kappa}(G)q^{asc(\kappa)}
  \end{equation}
  where the sum runs over all proper colorings $\kappa$ of $G$.

  It is natural to try to extend our definition on vertex-weighted graphs to work in this setting. Ideally, such an extension would equip the chromatic quasisymmetric function with a deletion-contraction relation.  However, a first attempt
\begin{equation}\label{eq:delconquasi}
X_{(G,w,\gamma)}(q,x_1,x_2,\dots) = \sum_{\kappa} x_{\kappa}(G,w)q^{asc(\kappa)}
\end{equation}
fails to provide a deletion-contraction relation.  To see this, consider a vertex-weighted graph $(G,w)$ with edge $e = v_1v_2$, and assume we are considering an orientation of $G$ in which $v_1 \rightarrow v_2$.  In the following table, we determine how the power of $q$ in a proper coloring $\kappa$ of $G \bk e$ relates to the power of $q$ of corresponding proper colorings of $G / e$ or $G$ (thus all numbers $asc(\kappa)$ are relative to $G \bk e$):

\begin{center}
  \begin{tabular}{|c|c|c|c|}
    \hline
    $\kappa(v_1)$ vs $\kappa(v_2)$ & $G \bk e$ & $G / e$ & $G$ \\
    \hline
    $=$ & $q^{asc(\kappa)}$ & $q^{asc(\kappa)}$ & N/A \\
    \hline
    $>$ & $q^{asc(\kappa)}$ & N/A & $q^{asc(\kappa)}$ \\
    \hline
    $<$ & $q^{asc(\kappa)}$ & N/A & $q^{asc(\kappa)+1}$ \\
    \hline
  \end{tabular}
\end{center}

An explicit example illustrates the problem implied by the asymmetry of this table.  Consider the three-vertex path $P_3$ with orientation $\gamma_0$ that has exactly one sink, and exactly one source (a vertex that is not the head of any oriented edge), and with all vertex weights equal to $1$.  Then if we consider powers of $q$ in the specialization $x_1 = x_2 = x_3 = 1$, $x_i = 0$ for $i \geq 4$, we have

$$
X_{(P_3,1^3,\gamma_0)}(q,1,1,1,0,0,\dots) = q^2+10q+1.
$$
If we take the edge $e$ of this path that has the source as an endpoint, then using the same specialization we have
$$
X_{(P_3 \bk e, 1^3, \gamma_0)} = 9q + 9
$$
and
$$
X_{(P_3 / e, 1^3 / e, \gamma_0 / e)} = 3q + 3.
$$

Note that $q+1$ divides the second and third of these functions but not the first, so we cannot have a simple deletion-contraction relation of the form $X_{(G,w,\gamma)} = (f(q))^aX_{(G \bk e, w, \gamma \bk e)} \pm (g(q))^bX_{(G / e, w / e, \gamma / e)}$ where $f$ and $g$ are polynomials in $q$.

There is a way to provide a deletion-contraction analog if we expand upon how the relation may look.  Given a vertex-weighted graph $(G,w)$ with fixed edge $e = v_1v_2$, and an orientation $\gamma$, define $\gamma_{\overleftarrow{e}}$ to be the same orientation as $\gamma$ except with the order of the head and tail of edge $e$ reversed.  Then the following relation holds:

\begin{lemma}
  Let $(G,w)$ be a vertex-weighted graph, and let $e$ be an edge of $G$.  Let $\gamma$ be an orientation of $G$.  Then
  \begin{equation}\label{eq:delconflip}
X_{(G,w,\gamma)} + X_{(G,w,\gamma_{\overleftarrow{e}})} = (1+q)(X_{(G \bk e, w, \gamma)} - X_{(G / e, w / e, \gamma / e)}).
  \end{equation}
\end{lemma}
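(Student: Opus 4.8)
The plan is to prove \eqref{eq:delconflip} by a termwise argument closely modelled on the proof of Lemma~\ref{lem:delcon} together with the case analysis recorded in the table preceding the statement. Fix the edge $e = v_1v_2$ and, without loss of generality, assume $\gamma$ orients $e$ as $v_1 \to v_2$, so that $\gamma_{\overleftarrow{e}}$ orients it as $v_2 \to v_1$; let $v^*$ be the contracted vertex of $G/e$. As in the proof of Lemma~\ref{lem:delcon}, split the proper colorings $\kappa$ of $G \bk e$ according to whether $\kappa(v_1) = \kappa(v_2)$ or $\kappa(v_1) \neq \kappa(v_2)$: the colorings in the second class are precisely the proper colorings of $G$ (with respect to either orientation), while those in the first class correspond bijectively, via $\kappa \mapsto \kappa_e$, to the proper colorings of $G/e$.

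Next I would record two invariances for the first class. First, $x_\kappa(G \bk e, w) = x_{\kappa_e}(G/e, w/e)$, exactly as in Lemma~\ref{lem:delcon}. Second --- and this is the one point that requires checking --- the ascent numbers satisfy $asc_\gamma(\kappa) = asc_{\gamma/e}(\kappa_e)$: going through the three types of edge in the definition of the contracted orientation $\gamma_e$, an edge avoiding $\{v_1,v_2\}$ keeps both its orientation and its endpoint colors; an edge $vv_i$ becomes $vv^*$ with the same head/tail roles and $\kappa_e(v^*) = \kappa(v_i)$, so its ascent status is unchanged; and an edge internal to $\{v_1,v_2\}$ (other than $e$) becomes a loop, which contributes no ascent, and already contributed none since $\kappa(v_1) = \kappa(v_2)$. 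Consequently
$$X_{(G \bk e, w, \gamma)} = \Sigma + X_{(G/e, w/e, \gamma/e)}, \qquad \Sigma := \sum_{\kappa} x_\kappa(G,w)\, q^{asc_\gamma(\kappa)},$$
where the sum defining $\Sigma$ runs over the proper colorings $\kappa$ of $G$ and $asc_\gamma(\kappa)$ is computed in $G \bk e$ (equivalently, ignoring the edge $e$).

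Then I would compute the left-hand side of \eqref{eq:delconflip}. Every proper coloring $\kappa$ of $G$ contributes to both $X_{(G,w,\gamma)}$ and $X_{(G,w,\gamma_{\overleftarrow{e}})}$, and the only edge whose ascent status can differ among $(G,\gamma)$, $(G,\gamma_{\overleftarrow{e}})$ and $G\bk e$ is $e$ itself. If $\kappa(v_1) < \kappa(v_2)$ then $e$ is an ascent for $\gamma$ but not for $\gamma_{\overleftarrow{e}}$; if $\kappa(v_1) > \kappa(v_2)$ the reverse holds; in either case the combined contribution of $\kappa$ is $x_\kappa(G,w)\,(q^{asc_\gamma(\kappa)} + q^{asc_\gamma(\kappa)+1}) = (1+q)\,x_\kappa(G,w)\,q^{asc_\gamma(\kappa)}$, with $asc_\gamma(\kappa)$ again computed in $G \bk e$. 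Summing over all proper colorings $\kappa$ of $G$ gives $X_{(G,w,\gamma)} + X_{(G,w,\gamma_{\overleftarrow{e}})} = (1+q)\Sigma$, and substituting the displayed identity for $\Sigma$ yields \eqref{eq:delconflip}.

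The argument has no genuinely hard step; the main point to get right is the ascent invariance $asc_\gamma(\kappa) = asc_{\gamma/e}(\kappa_e)$ under orientation contraction, which is why it must be checked edge type by edge type against the definition of $\gamma_e$. The only other thing to verify is the degenerate case in which $e$ is a loop or is parallel to another edge of $G$: then $G/e$ has a loop, so $X_{(G/e, w/e, \gamma/e)} = 0$ and also there are no proper colorings of $G \bk e$ with $\kappa(v_1) = \kappa(v_2)$, whence $\Sigma = X_{(G\bk e, w,\gamma)}$; moreover $\gamma_{\overleftarrow{e}} = \gamma$ when $e$ is a loop, and in all of these subcases \eqref{eq:delconflip} reduces to an identity that holds trivially.
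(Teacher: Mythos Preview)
Your proof is correct and follows essentially the same termwise correspondence as the paper: split proper colorings of $G\backslash e$ by whether $\kappa(v_1)=\kappa(v_2)$, match the equal-color class to $G/e$ and the unequal-color class to the pair $(\gamma,\gamma_{\overleftarrow{e}})$ on $G$. Your organization via the intermediate sum $\Sigma$ and your explicit edge-by-edge verification that $asc_\gamma(\kappa)=asc_{\gamma/e}(\kappa_e)$ are slightly more detailed than the paper, which simply asserts the equality of the corresponding terms; the degenerate-case discussion is a bit tangled (when $e$ is a loop, $G/e=G\backslash e$ need not itself have a loop---the identity holds because both sides vanish), but this does not affect correctness.
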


\begin{proof}
  We first rearrange \eqref{eq:delconflip} into
  \begin{equation}\label{eq:flipplus}
(1+q)X_{(G \bk e, w, \gamma)} = X_{(G,w,\gamma)} + X_{(G,w,\gamma_{\overleftarrow{e}})} + (1+q)X_{(G / e, w / e, \gamma / e)}.
  \end{equation}

  The result is clear if $e$ is a loop, so let $e = v_1v_2$ have distinct endpoints.  We show that \eqref{eq:flipplus} holds by showing a one-to-one correspondence between the terms on the left-hand side, and sets of terms on the right-hand side.  Consider a proper coloring $\kappa$ of $G \bk e$, and let $asc(\kappa)$ be the ascent number of $\kappa$ with respect to $\gamma$.  We split into cases based on the colors $\kappa$ gives to $v_1$ and $v_2$.

  If $\kappa(v_1) = \kappa(v_2)$, then the term $(1+q)x_{\kappa}(G \bk e, w)q^{asc(\kappa)}$ is equal to the term \\ $(1+q)x_{\kappa^e}(G / e, w / e)q^{asc(\kappa)}$ of $(1+q)X_{(G / e, w / e, \gamma / e)}$, where $\kappa^e$ is the contraction of the coloring $\kappa$ with respect to $e$.  Furthermore, there is no corresponding term of either $X_{(G,w,\gamma)}$ or $X_{(G,w,\gamma_{\overleftarrow{e}})}$ since $\kappa$ is not a proper coloring of $G$.

  If $\kappa(v_1) \neq \kappa(v_2)$, then we get a term of $(1+q)x_{\kappa}(G \bk e, w)q^{asc(\kappa)}$ on the left-hand side of \eqref{eq:flipplus} as before.  There is no corresponding term of $X_{(G / e, w / e, \gamma / e)}$ since the coloring $\kappa$ does not contract to one on $G / e$.  We do get corresponding terms of both $X_{(G,w,\gamma)}$ and $X_{(G,w,\gamma_{\overleftarrow{e}})}$, equal to $x_{\kappa}(G, w)q^{asc(\kappa)}$ and $x_{\kappa}(G, w)q^{asc(\kappa)+1}$  in some order depending on the orientation of $e$.  Furthermore, these terms satisfy
  $$
(1+q)x_{\kappa}(G \bk e, w)q^{asc(\kappa)} = x_{\kappa}(G, w)q^{asc(\kappa)} + x_{\kappa}(G, w)q^{asc(\kappa)+1}
  $$
  since $x_{\kappa}(G \bk e, w) = x_{\kappa}(G, w)$.

  Thus, there is a bijective correspondence of terms from the left-hand side of \eqref{eq:flipplus} to a unique set of terms on the right-hand side, and this concludes the proof.
  
\end{proof}

\end{subsection}

\begin{subsection}{$e$- and $s$-positivity}

Another possible application of the deletion-contraction method is expanding the chromatic symmetric functions of certain families of graphs to prove that the coefficients are nonnegative in a fixed basis.  For this purpose, the edge-addition form \eqref{eq:edgeadd} of the deletion-contraction relation seems promising, as it is a sum instead of a difference, so would maintain nonnegativity.

Research in positivity has generally focused on incomparability graphs of partially ordered sets.  A \emph{partially ordered set} (or \emph{poset}) is pair $(P, \preceq)$ where $P$ is a set, and $\preceq$ is a binary relation on $P$ that is a \emph{partial order} on $P$, meaning that the following are satisfied:

\begin{itemize}
\item $p \preceq p$ for all $p \in P$ (reflexivity)
\item For any distinct $p, q \in P$ at most one of $p \preceq q$ and $q \preceq p$ holds (antisymmetry)
\item For any $p,q,r \in P$, if $p \preceq q$ and $q \preceq r$ then $p \preceq r$ (transitivity)
\end{itemize}
Distinct elements $p, q \in P$ are \emph{comparable} if either $p \preceq q$ or $q \preceq p$; otherwise $p$ and $q$ are \emph{incomparable}.  The \emph{incomparability graph} of the poset $(P, \preceq)$ is the simple graph with vertex set $P$ and edge set $\{pq : \text{p and q are incomparable}\}$.  It is denoted by $Inc(P, \preceq)$, or just $Inc(P)$ if there is no ambiguity in the choice of relation $\preceq$.

Any $Q \subseteq P$ defines an \emph{induced subposet} of $P$ by simply considering $(Q, \preceq)$.  Research on positivity focuses on incomparability graphs of $(3+1)$-free posets,  meaning those $Inc(P)$ where $P$ does not contain an induced subposet isomorphic to $(\{a,b,c,d\}, \preceq)$, where $a \preceq b, b \preceq c, a \preceq c$, and $d$ is incomparable to all of $a,b,c$.  In particular, one of the most important open problems involving the chromatic symmetric function is the Stanley-Stembridge conjecture:
\begin{conj}\label{conj:stanstem}{(\cite{stanley})}
Every incomparability graph of a $(3+1)$-free poset is $e$-positive.\footnote{Notably, \cite{guay} shows that this conjecture is equivalent to the statement that unit interval graphs are $e$-positive.  This is a seemingly stronger statement, since unit interval graphs are precisely the incomparability graphs of posets that are simultaneously $(3+1)-$ and $(2+2)-$free.}
\end{conj}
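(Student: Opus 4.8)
The plan is to attack Conjecture~\ref{conj:stanstem} by setting up an induction driven by the edge-addition form~\eqref{eq:edgeadd} of the deletion-contraction relation, which, being a \emph{sum}, has a chance of propagating $e$-positivity. First I would reduce to unit interval graphs: by \cite{guay} it is enough to prove that $X_G$ is $e$-positive for every unit interval graph $G$, and each such $G$ carries a canonical \emph{natural order} $v_1,\dots,v_n$ in which $v_iv_j\in E(G)$ with $i<j$ forces $v_iv_k,v_kv_j\in E(G)$ for all $i<k<j$. The natural objects to induct over are then vertex-weighted graphs $(G,w)$ whose underlying graph is a unit interval graph carrying such an order. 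One checks that every non-complete such $H$ has a non-edge whose addition produces another unit interval graph $G$ (same vertices, one fewer non-edge), so that~\eqref{eq:edgeadd}, applied with the simple-contraction form of Lemma~\ref{lem:delcon}, gives $X_H = X_{(G,w)} + X_{(G\nmid e,w/e)}$. I would run this as a double induction on $\bigl(|V(G)|,\ \binom{|V(G)|}{2}-|E(G)|\bigr)$, hoping that the base cases are disjoint unions of cliques, for which by Lemma~1 the chromatic symmetric function is a positive multiple of an $e_\lambda$ in the \emph{unweighted} case; choosing the added non-edge canonically so that the recursion tree terminates there is routine bookkeeping, though pinning down the precise class closed under the contractions that arise takes some care.

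Here is where the plan stalls, and why it does not already settle the conjecture. The contraction term $X_{(G\nmid e,w/e)}$ lives on a genuinely vertex-weighted graph, and the recursion above \emph{cannot avoid} such graphs: contracting an edge merges two vertices into one of larger weight, and already the one-vertex graph of weight $k$ has $X_{(G,w)}=p_k$, with $p_2=e_{11}-2e_2$ not $e$-positive; likewise $X_{(G,w)}$ for a weighted complete graph is a multiple of $m_w$, not of $e_w$. More generally, as the authors establish in Section~5, $X_{(G,w)}$ is neither $e$-positive nor $e$-negative whenever $w$ is not the all-ones weight. Thus the inductive hypothesis cannot even be phrased as ``$e$-positive'': the two summands on the right of~\eqref{eq:edgeadd} are individually \emph{not} $e$-positive, and whatever $e$-positivity $X_H$ enjoys must emerge from cancellation between them.

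Consequently any successful version of this plan must do one of two things. Either (i) identify a genuinely finer nonnegativity property of $X_{(G,w)}$ that specializes to $e$-positivity on all-ones weights and is preserved both by edge deletion and by the weighted contractions that occur — no such property is presently known, and finding it (perhaps constrained by the identities for $\sigma(X_{(G,w)})$ and the refined sink counts of Theorem~\ref{thm:elm}, which any candidate invariant must respect) appears to be the real crux; or (ii) forgo term-by-term positivity and instead expand $X_H$ by fully iterating~\eqref{eq:delcon} down to edgeless weighted graphs, as in the proofs of~\eqref{eq:pbasis} and Theorem~\ref{thm:cycles}, then show directly that the non-$e$-positive contributions of the weighted pieces cancel in the aggregate — a computation one might try to organize using the abundant triangle and cycle relations of unit interval graphs (Theorem~\ref{thm:cycles} together with the Orellana--Scott relation), but which no one has managed to control. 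A realistic intermediate goal, rather than the full conjecture, is to push the plan through for structured subfamilies — paths, cycles, complete multipartite graphs, lollipops, and clique chains — where the recursion tree is small and the sign accounting stays tractable.
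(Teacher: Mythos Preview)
The statement you are addressing is a \emph{conjecture}, not a theorem: the paper does not prove it, and indeed the Stanley--Stembridge conjecture remains open. There is therefore no ``paper's own proof'' to compare against. What the paper does in Section~5.2 is exactly what you do in your second and third paragraphs: observe that the edge-addition form~\eqref{eq:edgeadd} looks promising because it is a sum, and then explain why the na\"ive induction fails, since by Corollary~\ref{cor:ebasis} no vertex-weighted graph with a vertex of weight greater than~$1$ has $(-1)^{d-n}X_{(G,w)}$ $e$-positive.

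Your proposal is candid that it is not a proof, and your diagnosis of the obstruction is correct and matches the paper's. A few remarks on the details. Your inductive setup in the first paragraph has the roles slightly tangled: in~\eqref{eq:edgeadd} one writes $X_{(G\backslash e,w)} = X_{(G,w)} + X_{(G/e,w/e)}$, so if $H$ is the graph with \emph{fewer} edges you want $H = G\backslash e$, meaning $G$ is obtained from $H$ by \emph{adding} an edge; your sentence ``every non-complete such $H$ has a non-edge whose addition produces another unit interval graph $G$'' is fine, but then you should be inducting on the number of \emph{edges} of $G$ (or equivalently decreasing the number of non-edges), not on $|V(G)|$ first. More substantively, even granting that unit interval graphs are closed under the right edge-additions, the contraction $G\nmid e$ need not be a unit interval graph, and in any case carries nontrivial weights, so the class over which you induct is forced to be strictly larger than unweighted unit interval graphs --- which is precisely the point at which, as you say, $e$-positivity evaporates as a usable hypothesis.

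In short: there is no gap to name because there is no claimed proof; your write-up is an accurate account of why deletion-contraction alone does not resolve Conjecture~\ref{conj:stanstem}, and it is in agreement with the paper's own discussion.
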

Thus, we consider $e$-positivity of vertex-weighted graphs in an attempt to approach the Stanley-Stembridge conjecture, perhaps by generalizing it to a class of graphs on which deletion-contraction may be applied.

In the case of a vertex-weighted graph $(G,w)$ with $n$ vertices and total weight $d$, it is easy to see from equations \eqref{eq:pbasis} and \eqref{eq:ppositive} that in the $e$-basis expansion the coefficient of $e_d$ is $(-1)^{d-n}$, so the natural extension is to ask whether $(-1)^{d-n}X_{(G,w)}$ is $e$-positive.

This question of $e$-positivity can be answered for all vertex-weighted graphs with nontrivial vertex weights.  We define a \emph{connected partition} of a vertex-weighted graph $(G,w)$ to be a partition $P_1 \sqcup \dots \sqcup P_m = V(G)$ of the vertex set such that for each $i$, the subgraph of $G$ induced by restricting to $P_i$ is connected.  We define the \emph{type} of a connected partition to be the integer partition whose parts are $w(P_1), \dots ,w(P_m)$.  The following lemma may be proved by a straightforward generalization of the proof of (\cite{wolfgang}, Proposition 1.3.3):

\begin{lemma}\label{lem:epos}

If $(G,w)$ is a vertex-weighted graph with $n$ vertices and total weight $d$ such that $(-1)^{d-n}X_{(G,w)}$ is $e$-positive, and $(G,w)$ has a connected partition of type $\lm \vdash d$, then it also has a connected partition of type $\mu$ for every partition $\mu$ that is a refinement of $\lm$.

\end{lemma}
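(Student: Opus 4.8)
The plan is to pass from the $e$-expansion hypothesis to the power-sum expansion of $\overline{X}_{(G,w)}=(-1)^{d-n}\omega(X_{(G,w)})$, and to show that the support of that power-sum expansion records exactly the types of connected partitions of $(G,w)$. The first step, which does not use the hypothesis, is the following claim: for \emph{every} vertex-weighted graph $(G,w)$, the coefficient of $p_\mu$ in $\overline{X}_{(G,w)}$ is strictly positive if $(G,w)$ has a connected partition of type $\mu$, and is $0$ otherwise. To prove it, begin from \eqref{eq:pbasis} and group the subsets $S\subseteq E(G)$ according to the partition $\pi(S)$ of $V(G)$ into the connected components of $(V(G),S)$; each $\pi(S)$ is a connected partition of $(G,w)$ whose type is $\lm(G,w,S)$. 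For a fixed connected partition $\pi$ with blocks $B_1,\dots,B_k$, the subsets $S$ with $\pi(S)=\pi$ are exactly the disjoint unions $S_{B_1}\cup\dots\cup S_{B_k}$ with $S_{B_i}\subseteq E(G[B_i])$ and $(B_i,S_{B_i})$ connected; collecting these contributions and then applying $(-1)^{d-n}\omega$ (the signs cancelling exactly) shows that the coefficient of $p_\mu$ in $\overline{X}_{(G,w)}$ equals
$$
\sum_{\pi}\ \prod_{B}\Bigl((-1)^{|B|-1}\sum_{S_{B}}(-1)^{|S_{B}|}\Bigr),
$$
where the outer sum is over connected partitions $\pi$ of $(G,w)$ of type $\mu$, the product is over the blocks $B$ of $\pi$, and $S_B$ ranges over the edge-subsets of $G[B]$ for which $(B,S_B)$ is connected. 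By Whitney's expansion of the chromatic polynomial, each inner factor is $(-1)^{|B|-1}$ times the coefficient of $t$ in $\chi_{G[B]}(t)$, which is a strictly positive integer because the chromatic polynomial of a connected graph on $b\ge 1$ vertices has its $t$-coefficient nonzero of sign $(-1)^{b-1}$ (equivalently, the bond lattice of $G[B]$ is geometric, hence has nonvanishing M\"obius number). So the displayed coefficient is a sum of strictly positive terms, nonempty precisely when a connected partition of type $\mu$ exists. (This is exactly where the cited Proposition~1.3.3 is generalized: throughout, ``size of a block'' is replaced by ``total weight of a block''.)

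The second step uses the hypothesis. Write $(-1)^{d-n}X_{(G,w)}=\sum_{\nu}c_\nu e_\nu$ with every $c_\nu\ge 0$. Applying $(-1)^{d-n}\omega$ and using \eqref{eq:ppositive} on the left together with $\omega(e_\nu)=h_\nu$ on the right gives $\overline{X}_{(G,w)}=\sum_{\nu}c_\nu h_\nu$. Since $h_m$ expands in the power-sum basis with strictly positive coefficients and $h_\nu=\prod_i h_{\nu_i}$, each $h_\nu$ expands as $\sum_\mu b_{\nu\mu}p_\mu$ with every $b_{\nu\mu}\ge 0$, and $b_{\nu\mu}>0$ precisely when $\mu$ is a refinement of $\nu$ (the parts of $\mu$ can be grouped into blocks whose sums are the parts of $\nu$). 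Hence the coefficient of $p_\mu$ in $\overline{X}_{(G,w)}$ equals $\sum_{\nu}c_\nu b_{\nu\mu}$, a sum of nonnegative terms, and is therefore strictly positive exactly when there is a partition $\nu$ with $c_\nu>0$ and $\mu$ a refinement of $\nu$.

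Combining the two steps finishes the proof. Suppose $(G,w)$ has a connected partition of type $\lm\vdash d$. By the first step the coefficient of $p_\lm$ in $\overline{X}_{(G,w)}$ is strictly positive, so by the second step there is a partition $\nu$ with $c_\nu>0$ and $\lm$ a refinement of $\nu$. Now let $\mu$ be any refinement of $\lm$; since refinement of integer partitions is transitive, $\mu$ is a refinement of $\nu$ as well, so $b_{\nu\mu}>0$ and the coefficient of $p_\mu$ in $\overline{X}_{(G,w)}$ is strictly positive. By the first step again, $(G,w)$ has a connected partition of type $\mu$, as required.

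I expect the main obstacle to be the first step: upgrading the $p$-positivity of $\overline{X}_{(G,w)}$ that \eqref{eq:ppositive} already provides to the sharper statement that the $p_\mu$-coefficient is \emph{nonzero} whenever a connected partition of type $\mu$ exists. This needs the classical nonvanishing (with a definite sign) of the relevant chromatic-polynomial coefficient, equivalently of a bond-lattice M\"obius number, for connected graphs, together with the careful sign bookkeeping through $\omega$ indicated above; the remaining steps are routine changes of basis among the $e$-, $h$-, and $p$-bases.
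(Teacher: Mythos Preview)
Your argument is correct and is precisely the ``straightforward generalization of the proof of \cite[Proposition~1.3.3]{wolfgang}'' that the paper invokes in lieu of its own proof: identify the $p$-support of $\overline{X}_{(G,w)}$ with the set of connected-partition types via \eqref{eq:pbasis} and a chromatic-polynomial sign argument, then use $e$-positivity through $\omega(e_\nu)=h_\nu$ and the refinement-supported $p$-expansion of $h_\nu$ to force downward closure. There is nothing to add; the minor phrasing around ``applying $(-1)^{d-n}\omega$'' resolves correctly once one notes that applying $\omega$ alone to $(-1)^{d-n}X_{(G,w)}=\sum_\nu c_\nu e_\nu$ already yields $\overline{X}_{(G,w)}=\sum_\nu c_\nu h_\nu$ by \eqref{eq:inv}.
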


This yields:

\begin{cor}\label{cor:ebasis}

  Let $(G,w)$ be a vertex-weighted graph.  If there is a vertex $v \in V(G)$ such that $w(v) > 1$, then $(-1)^{d-n}X_{(G,w)}$ is not $e$-positive.

\end{cor}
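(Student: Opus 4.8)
The plan is to derive Corollary~\ref{cor:ebasis} from Lemma~\ref{lem:epos} by exhibiting a connected partition whose type admits no valid refinement of the required form. Suppose for contradiction that $(-1)^{d-n}X_{(G,w)}$ is $e$-positive, where $(G,w)$ has a vertex $v$ with $w(v) > 1$. First I would restrict attention to the connected component $H$ of $G$ containing $v$: since $X_{(G,w)}$ is multiplicative over connected components and $e$-positivity is preserved under products (and each component contributes a factor with its own sign $(-1)^{d_i - n_i}$), if $(-1)^{d-n}X_{(G,w)}$ is $e$-positive then so is $(-1)^{d_H - n_H}X_{(H,w)}$ for the component $H$. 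So we may assume $G$ is connected.

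Next, since $G$ is connected, the single-block partition $P_1 = V(G)$ is a connected partition, of type $\lm = (d)$, the one-part partition of $d$. By Lemma~\ref{lem:epos}, $(G,w)$ must then have a connected partition of type $\mu$ for \emph{every} refinement $\mu$ of $(d)$ — that is, for every partition $\mu \vdash d$ whatsoever. The key observation is that I can choose a $\mu$ that is impossible to realize: because $w(v) \geq 2$, any block of a connected partition containing $v$ has weight at least $w(v) \geq 2$, while every block has weight at least $1$. Consider the partition $\mu = (1^{d-1}, \text{something})$ — more carefully, take $\mu$ to have exactly $n$ parts with all parts equal to $1$ except one part of size $d - n + 1$; no wait: the cleanest choice is $\mu = 1^d$, the all-ones partition of $d$. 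A connected partition of type $1^d$ would require partitioning $V(G)$ into $d$ blocks each of weight exactly $1$ and each inducing a connected subgraph. But $G$ has only $n$ vertices, and a partition into $d > n$ nonempty blocks is impossible (since $w(v) \geq 2$ forces $d \geq n+1$). This contradiction completes the proof.

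The main thing to get right is the bookkeeping on signs and components in the reduction to the connected case, and confirming that $w(v) > 1$ genuinely forces $d > n$: indeed $d = \sum_{u} w(u) \geq w(v) + (n-1) \geq 2 + (n-1) = n+1 > n$, so $1^d$ is a partition of $d$ into more than $n$ parts, which no partition of an $n$-element vertex set can have. I expect no serious obstacle here; the only subtlety is making sure Lemma~\ref{lem:epos} is being applied with $\lm = (d)$, which is legitimate precisely because the connectivity of $G$ makes the trivial one-block partition a connected partition, and then $1^d$ is a refinement of $(d)$ in the partition-refinement order. One could alternatively avoid the reduction to connected graphs by taking $\lm$ to be the type of the partition of $V(G)$ into its connected components and noting that $1^d$ still refines it, but handling a single component is cleaner to state.
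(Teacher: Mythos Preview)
Your main argument has a genuine gap in the reduction to the connected case. You write that since ``$e$-positivity is preserved under products,'' the $e$-positivity of $(-1)^{d-n}X_{(G,w)}$ implies that of $(-1)^{d_H-n_H}X_{(H,w)}$. But this is the wrong direction: preservation under products says that a product of $e$-positive functions is $e$-positive, not that a factor of an $e$-positive function is $e$-positive. The latter is false in general; for instance $f = e_{22} - e_{211} + e_{1111}$ is not $e$-positive, yet $f \cdot (e_2 + e_{11}) = e_{222} + e_{111111}$ is. You give no argument specific to chromatic symmetric functions that would rescue this step.

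That said, the alternative you mention in your final sentence is correct and is essentially what the paper does. You suggest taking $\lambda$ to be the type of the partition of $V(G)$ into connected components; the paper instead takes $\lambda = (w_1,\dots,w_n)$, the type of the partition into singleton vertices. Either choice is a connected partition of $(G,w)$ that exists unconditionally (no reduction needed), and in both cases $1^d$ is a refinement that cannot be realized because $d > n$. The paper's choice of singletons is marginally cleaner since connectedness of a single vertex is immediate, but the arguments are the same in spirit. So your instinct in that last sentence was right: the reduction to the connected case is not only unnecessary but is in fact where your proposed proof breaks, and dropping it in favor of the alternative yields exactly the paper's argument.
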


\begin{proof}

Let $(G,w)$ have vertex weights $w_1 \geq \dots \geq w_n$.  Then $(G,w)$ has a connected partition of type $(w_1,\dots,w_n)$, but it does not have one of type $1^d$, so by the previous lemma, $(-1)^{d-n}X_{(G,w)}$ is not $e$-positive.

\end{proof}

Although this answers the natural weighted analogue of the Stanley-Stembridge Conjecture, there is more work to be done here.  The $e$-basis may not be optimal for considering vertex-weighted graphs; perhaps there is a choice of basis better suited for positivity questions in this setting.  Alternatively, perhaps we may still modify the conjecture to apply in this setting; for example, one possible approach would be to lower-bound the $e$-basis coefficients of certain vertex-weighted graphs of $n$ vertices and total weight $d$ by a function of the ``excess weight'' $d-n$ in such a way that this lower bound is $0$ when $d-n=0$.

In addition to $e$-positivity, a result of Gasharov provided the first major step into studying $s$-positivity of the chromatic symmetric function.  Given a poset $(P,\preceq)$, and a partition $\lm \vdash |P|$, define a \emph{P-tableau of shape} $\lm$ to be a filling of the Young diagram of shape $\lm$ with elements of $P$, each occurring exactly once, satisfying
\begin{itemize}
\item The entries are strictly increasing along rows (if $q$ is immediately to the right of $p$, then $p \preceq q$).
\item The entries are weakly increasing down columns (if $q$ is immediately below $p$, then $q \npreceq p$).
\end{itemize}
Thus, a $P$-tableau of shape $\lm$ is a generalization of the notion of a semi-standard Young tableau (with rows and columns switched).  Gasharov \cite{gash} showed:
\begin{theorem}\label{spos}
  Let $P$ be a $(3+1)$-free poset, and for each $\lm \vdash |P|$, let $f_{\lm}^P$ be the number of $P$-tableaux of shape $\lm$.  Then
$$
X_{Inc(P)} = \sum_{\lm \vdash |P|} f_{\lm}^Ps_{\lm}.
$$
In particular, claw-free incomparability graphs are $s$-positive.
\end{theorem}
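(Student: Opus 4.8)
The plan is to reduce the whole statement to the single symmetric-function identity
\[
\sum_{\lambda \vdash |P|} f^P_\lambda \, s_\lambda = X_{Inc(P)},
\]
from which $s$-positivity is immediate, since the $f^P_\lambda$ are nonnegative integers. The sentence about claw-free graphs then follows as well: an induced claw $K_{1,3}$ in $Inc(P)$, with centre $a$ and leaves $b,c,d$, is exactly a $3$-element chain $\{b,c,d\}$ of $P$ together with an element $a$ incomparable to all of $b,c,d$ -- that is, an induced $3+1$ -- so the claw-free incomparability graphs are precisely the graphs $Inc(P)$ with $P$ a $(3+1)$-free poset. Throughout I would use that a proper colouring of $Inc(P)$ is nothing but a map $\kappa\colon P\to\mathbb{Z}^+$ every colour class of which is a chain of $P$, so that $X_{Inc(P)}=\sum_\kappa\prod_{p\in P}x_{\kappa(p)}$ with the sum over such \emph{chain colourings}.

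To prove the identity I would expand the left-hand side with the Jacobi--Trudi identity $s_\lambda=\det\bigl(h_{\lambda_i-i+j}\bigr)$, write the determinant as a signed sum over permutations, and expand each $h_m$ as a sum over weakly increasing colour words of length $m$. Bringing in also the $P$-tableaux of shape $\lambda$ (there are $f^P_\lambda$ of them), this presents $\sum_\lambda f^P_\lambda s_\lambda$ as a signed sum over combinatorial objects -- call them \emph{$P$-arrays} -- each built from a $P$-tableau, a permutation, and the colour data contributed by the $h$-factors, the sign being the sign of the permutation. The $P$-arrays attached to the identity permutation are exactly the $P$-tableaux together with weakly increasing colourings of their rows, and these carry positive sign.

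The main work is then a sign-reversing involution on the $P$-arrays that are not of this last type. One locates the first place (scanning top to bottom, then left to right) at which the tableau condition fails and swaps the tails of the two rows meeting at that failure, in the spirit of the tail-switching move behind the lattice-path proof of Jacobi--Trudi; this flips the sign of the permutation and squares to the identity. The surviving terms are the $P$-tableaux with weakly increasing row colourings, all positively signed, and a comparison of the coefficient of any fixed monomial shows that the resulting (positive) generating function is exactly $X_{Inc(P)}$.

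The step I expect to be the real obstacle -- and the only one that uses the hypothesis -- is verifying that the tail swap is well defined: because a row of a $P$-array is a chain of $P$ rather than a string of integers, swapping tails can in principle leave one of the two rows no longer a chain, and ruling this out requires showing that a certain local configuration cannot appear among the entries at and just after the failure. That forbidden configuration turns out to consist of three pairwise comparable elements of $P$ together with a fourth incomparable to all three, i.e.\ an induced $3+1$; so $(3+1)$-freeness is exactly what legitimises the involution. Pinning down the precise bookkeeping of the ``first failure'' and the exact form of the swap, so that this reduction to the $3+1$ pattern is clean, is where the difficulty lies; the remaining steps are routine.
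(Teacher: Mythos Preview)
The paper does not prove this theorem at all: it is quoted as Gasharov's result \cite{gash}, stated for context in the discussion of $s$-positivity, with no proof given. So there is nothing in the paper to compare your argument against.

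That said, your proposal is a faithful outline of Gasharov's own proof. The identification of an induced claw in $Inc(P)$ with an induced $3+1$ in $P$ is correct, and the overall architecture --- expand $\sum_\lambda f^P_\lambda s_\lambda$ via Jacobi--Trudi, interpret the resulting signed sum combinatorially, and cancel all but the chain-colouring terms by a tail-swapping sign-reversing involution --- is exactly Gasharov's method. You have also correctly located where the $(3+1)$-free hypothesis enters: it is precisely what guarantees that after swapping tails the two affected rows remain chains of $P$, so that the involution stays inside the set of $P$-arrays. Your caveat about the bookkeeping is apt; the careful choice of which failure to act on (so that the map is an involution and genuinely moves every non-fixed object) is where Gasharov's argument has to be read closely, but there is no conceptual gap in what you have written.
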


Further results since have determined the value of specific coefficients in the $s$-basis expansion of $X_G$, even for graphs $G$ that are not incomparability graphs.  For example, Kaliszewski \cite{kaz} showed

\begin{theorem}\label{thm:hook}
  Let $G$ be a graph with $n$ vertices, and let $a_k(G)$ be the number of acyclic orientations of $G$ with exactly $k$ sinks.  Then when expanding $X_G$ in the basis of Schur functions, the coefficient of $s_{(m,1^{n-m})}$ is
  $$
\sum_{k=1}^n \binom{k-1}{m-1}a_k(G)
  $$
\end{theorem}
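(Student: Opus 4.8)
The plan is to obtain the hook coefficients of $X_G$ in the Schur basis from its expansion in the elementary basis, since \eqref{eq:sinkmain} and \eqref{eq:sinks} already control the relevant sums of $e$-coefficients. Write $X_G = \sum_{\nu} c_{\nu} e_{\nu}$ (assume $1 \le m \le n$). First I would record the change of basis from $e$ to $s$: from the standard identity $h_{\nu} = \sum_{\lm} K_{\lm\nu} s_{\lm}$, applying the involution $\omega$ and using the standard facts $\omega(h_{\nu}) = e_{\nu}$ and $\omega(s_{\lm}) = s_{\lm'}$ (see \cite{mac,stanleybook}) gives $e_{\nu} = \sum_{\lm} K_{\lm'\nu} s_{\lm}$. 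Hence the coefficient of $s_{\mu}$ in $X_G$ is $\sum_{\nu} c_{\nu} K_{\mu'\nu}$; taking $\mu = (m,1^{n-m})$ and noting that the conjugate of this hook is again a hook, $\mu' = (n-m+1, 1^{m-1})$, the coefficient equals $\sum_{\nu} c_{\nu} K_{(n-m+1,\,1^{m-1}),\,\nu}$.

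The second step is a computation of the Kostka numbers of hook shape: for any partition $\nu$ of $n$ and integers $a \ge 1$, $b \ge 0$ with $a + b = n$, I claim $K_{(a,1^b),\,\nu} = \binom{l(\nu)-1}{b}$. This I would prove by an explicit bijection. In a semi-standard tableau $T$ of shape $(a,1^b)$ and content $\nu$, the first column is strictly increasing, so every copy of the value $1$ lies in the first row; in particular the $b$ cells below the corner carry $b$ distinct values, all at least $2$, listed in increasing order. Sending $T$ to the set $B \subseteq \{2,3,\dots,l(\nu)\}$ of values appearing in this leg is a bijection onto the $b$-element subsets of $\{2,\dots,l(\nu)\}$: given $B$, place its elements in increasing order down the leg and all remaining value-copies (all $1$'s, and the leftover copies of larger values) in weakly increasing order along the arm, which has exactly $n - b = a$ cells; the result is automatically semi-standard, the only adjacency needing a check being the corner, where $1 < \min B$ holds. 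Since $|\{2,\dots,l(\nu)\}| = l(\nu)-1$, the identity follows, and both sides vanish when $l(\nu)-1 < b$.

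Combining the two steps, the coefficient of $s_{(m,1^{n-m})}$ in $X_G$ is $\sum_{\nu} c_{\nu}\binom{l(\nu)-1}{m-1}$; grouping by $l(\nu) = k$ and invoking \eqref{eq:sinks}, which gives $\sum_{l(\nu)=k} c_{\nu} = a_k(G)$, this becomes $\sum_{k=1}^{n}\binom{k-1}{m-1} a_k(G)$, as claimed. (As a check, $m = 1$ recovers the coefficient of $s_{(1^n)} = e_n$ as $\sum_k a_k(G) = a(G)$, consistent with \eqref{eq:sinkmain}.) The only step with genuine content is the hook Kostka identity of the second paragraph, and even that argument is short; the things to watch are the degenerate ranges ($l(\nu)-1 < b$, and $m \in \{1,n\}$) and remembering to conjugate the indexing partition when passing between the $h$- and $e$-expansions. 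I would also remark that the same scheme yields a vertex-weighted analogue, replacing \eqref{eq:sinks} by Theorem~\ref{thm:elm}.
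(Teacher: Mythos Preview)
The paper does not give its own proof of Theorem~\ref{thm:hook}; it is quoted from Kaliszewski \cite{kaz} as background in the discussion of $s$-positivity, so there is no in-paper argument to compare against. That said, your derivation is correct and fully self-contained given \eqref{eq:sinks}: the change of basis $e_{\nu}=\sum_{\lm}K_{\lm'\nu}s_{\lm}$ is standard, the conjugate of $(m,1^{n-m})$ is indeed $(n-m+1,1^{m-1})$, and your bijection establishing $K_{(a,1^b),\nu}=\binom{l(\nu)-1}{b}$ is sound (the corner is forced to be $1$ since $\nu_1\ge 1$, and the arm-filling step is automatic). Your $m=1$ check is also right, since the coefficient of $s_{(1^n)}$ in $e_{\nu}$ is $K_{(n),\nu}=1$ for every $\nu\vdash n$, recovering $\sum_{\nu}c_{\nu}=a(G)$.

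One small caution on your closing remark: the vertex-weighted analogue via Theorem~\ref{thm:elm} does not go through verbatim, because the right-hand side of \eqref{eq:acyc} is a signed count rather than $a_k(G)$ itself, so the resulting hook-coefficient formula will be an alternating sum over pairs $(\gamma,S)$ rather than a manifestly nonnegative expression. This is fine as an identity, but it is worth saying explicitly.
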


Very recently, David and Monica Wang \cite{wang} have shown a general formula for any Schur function coefficient of a chromatic symmetric function as a weighted, signed sum of special rim-hook tabloids.  Although we do not yet have results concerning the $s$-positivity of vertex-weighted graphs, we believe that the use of a deletion-contraction relation could prove useful in this context as well.

\end{subsection}

\begin{subsection}{Other Possible Applications}

There are many further possible applications of the vertex-weighted graph construction.  The authors' paper \cite{paper} extends these ideas to the bad coloring chromatic symmetric function $XB_G$ of Stanley \cite{stanley2}, and uses deletion-contraction to find many ways to construct graphs with equal chromatic symmetric function, similar to the methods of Orellana and Scott \cite{ore}.  Other possible avenues of interest may include
\begin{itemize}
\item Partition systems as described by Lenart and Ray \cite{umbra}.  Given a set $S$ and a partition $\sigma$ of $S$, they define a partition system $P$ to be a set of subsets of $S$ such that $P$ contains the empty set and all blocks of $\sigma$, and all other elements of $P$ are unions of the blocks of $\sigma$ (henceforth the blocks of $\sigma$ are called atoms of $P$, and the set of atoms is denoted $At(P)$ so we may drop mention of $\sigma$).  One example of a partition system is the \emph{independence complex} $I(G)$ of a graph $G$, where the set is $V(G)$, and $I(G) = \{A \subseteq V(G): \forall v_1,v_2 \in A, v_1v_2 \notin E(G)\}$. In particular, $I(G)$ is an abstract simplicial complex, meaning that for every $A \subseteq V(G)$, $A \in I(G) \rightarrow B \in I(G)$$ $$\forall B \subseteq A$.

For a partition system $P$ on a set $S$, one may define
$$
X_P =\sum_{\kappa} \prod_{s \in S} x_{\kappa(s)}
$$
where the sum ranges over all $\kappa: S \rightarrow \mathbb{Z}^+$ such that for each positive integer $i$, the set $\{s \in S: \kappa(s) = i\}$ is in $P$.  Then clearly $X_G = X_{I(G)}$.  Lenart and Ray showed that for any nonempty $U \in P$ that is not an atom
$$
X_P = X_{P \bk\bk U} + X_{P / U} 
$$
where $P \bk\bk U = P \bk \{W: U \subseteq W\}$ and $P / U = \{W \in P: U \subseteq W \text{ or } U \cap W = \emptyset\}$.  In the case $P = I(G)$ where $G$ is a simple graph, the atoms are all vertices of $G$, and the possible choices of $U$ are the independent sets of $G$.  Taking the particular choice of $U = v_1v_2$ where $v_1v_2$ is a nonedge of $G$ yields
$$
X_{G \bk e} = X_G + X_{P / v_1v_2}.
$$
This does not provide a direct deletion-contraction relation for $X_G$ because in $P / v_1v_2$ not all atoms are singletons, and thus the partition system does not correspond to the independence complex of a graph.  Our $X_{(G,w)}$ provides the necessary construction to generalize this result to graphs, and it may be possible to relate other results on $X_P$ to $X_{(G,w)}$.  

\item Lenart and Ray also define an \emph{umbral chromatic polynomial} $\chi^{\phi}(P;x)$ \cite{umbra, ray2} for any partition system $P$, defined as a polynomial over the algebra $\mathbb{Z}[\phi_1,\phi_2,\dots]$, where the $\phi_i$ are indeterminates.  They show that for graphs $G$, knowing $\chi^{\phi}(I(G);x)$ is equivalent to knowing $X_G$, and that $\chi^{\phi}(P;x)$ satisfies an analog of deletion-contraction (\cite{umbra}, Proposition 5.7).  As with $X_P$, this relation does not correspond to one on $X_G$ because it includes terms involving partition systems $P$ that do not correspond to graphs; however, the introduction of $X_{(G,w)}$ provides an intermediate step, and it may be of interest to determine how the vertex-weighted graph construction fits into this setting, especially in the context of the addition-contraction tree given in \cite{ray2}.  
\item The path-cycle symmetric function on digraphs defined by Chow \cite{chow}.  In particular, the most appropriate formulation to generalize is likely
  $$
\Theta_D(x_1,x_2,\dots,y_1,y_2,\dots) = \sum_{(S,\kappa)} \prod_{v_1 \text{ in a path}} x_{\kappa}(v_1) \prod_{v_2 \text{ in a cycle}} y_{\kappa(v_2)}
  $$
  where the sum ranges over path-cycle covers $S$ and colorings $\kappa$ of $D$ such that paths and cycles are monochromatic, and the paths receive distinct colors.  Perhaps there is something akin to a natural deletion-contraction relation in this context.
\item The double poset construction of Grinberg \cite{grin}.  He defines a double poset $(E, \preceq_1, \preceq_2)$ using two partial orders on the same base set, and given a weight function $w: E \rightarrow \mathbb{Z}^+$, he defines the function
$$
\Gamma(E, w) = \sum_{\pi : E \rightarrow \mathbb{Z}^+} \prod_{e \in E} x_{\pi(e)}^{w(e)}
$$
where the sum runs over all $\pi$ that are $E$-partitions, a notion that generalizes $(P, \omega)$-partitions.  Results on $\Gamma(E,w)$ are in some cases directly applicable to $X_{(G,w)}$.  For example, as an alternate proof of \eqref{eq:inv}, one could fix an orientation $\gamma$ and pass to quasisymmetric functions.  Then, upon swapping out the symmetric function involution for the related antipode on quasisymmetric functions, \eqref{eq:inv} reduces to a special case of (\cite{grin}, Theorem 4.2).  There are likely other relations between these functions waiting to be discovered.
\end{itemize}

\end{subsection}

\begin{subsection}{A Brief Note On Weighted Trees}

As a final note we would be remiss not to mention one of the other major open problems of the chromatic symmetric function, the tree isomorphism conjecture, inspired by a question of Stanley \cite{stanley}:

\begin{conj}\label{conj:tree}

If $G$ and $H$ are trees, and $X_G = X_H$, then $G$ and $H$ are isomorphic.

\end{conj}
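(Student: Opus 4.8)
\medskip
\noindent\textbf{A proof proposal.} Conjecture~\ref{conj:tree} is open, so we only sketch a line of attack that leans on the vertex-weighted machinery of this paper. The idea is to prove the stronger statement that $X_{(T,w)}$ determines a vertex-weighted tree $(T,w)$ up to isomorphism, by induction on the number of vertices $n$. The base case $n=1$ is immediate, since then $X_{(T,w)} = p_{w(v)}$ and the single weight is read off. One should also record at the outset that a weight-$1$ leaf can be treated exactly as in the classical setting, so the genuinely new content concerns trees that stay ``genuinely weighted'' after contractions; a preliminary step is to fix a normal form --- a \emph{reduced} weighted tree --- in which this causes no ambiguity.

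For the inductive step, fix a leaf $\ell$ of $T$ with unique neighbour $u$ and set $e = u\ell$. Then $T \bk e$ is the disjoint union of $T' := T - \ell$ with an isolated vertex $\ell$, so by multiplicativity $X_{(T \bk e, w)} = p_{w(\ell)}\,X_{(T',w)}$; and $(T/e, w/e)$ is obtained from $T'$ by replacing $w(u)$ with $w(u)+w(\ell)$. The deletion-contraction relation~\eqref{eq:delcon} then reads
$$
X_{(T,w)} \;=\; p_{w(\ell)}\,X_{(T',w)} \;-\; X_{(T/e,\,w/e)}.
$$
Both $(T',w)$ and $(T/e,w/e)$ have $n-1$ vertices, so by the inductive hypothesis each is recovered from its chromatic symmetric function. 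The entire problem is thereby concentrated in one question: \emph{from $X_{(T,w)}$ alone, can one locate a leaf edge $e$ and separate the two summands $p_{w(\ell)}X_{(T',w)}$ and $X_{(T/e,w/e)}$ on the right-hand side?}

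To attack this separation one would bring in the invariants already visible in $X_{(T,w)}$. By Lemma~\eqref{eq:pbasis} we have $X_{(T,w)} = \sum_{S \subseteq E(T)} (-1)^{|S|} p_{\lm(T,w,S)}$, a signed enumeration of spanning subforests of $T$ by the multiset of their component weights; from this one recovers the total weight $d$, the number of edges and hence $n$ (consistently with the coefficient of $e_d$ being $(-1)^{d-n}$ via~\eqref{eq:ppositive} and~\eqref{eq:pbasis}), the multiset of weight-pairs arising from cutting a single edge, and the weighted degree sequence (extending the classical fact that $X_T$ determines the degree sequence of a tree). The plan is to use this data to single out a \emph{canonical} leaf edge $e_0$ --- for instance a leaf of maximal weight at a vertex of minimal weighted degree, with ties broken by a fixed order --- and then to argue that, after applying $\omega$ and passing to a suitable basis, the two summands attached to $e_0$ occupy separated ranges of monomials, so that each may be isolated and handed to the inductive hypothesis.

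The main obstacle is exactly this last step, and it is why the conjecture remains open: deletion-contraction supplies a single equation in two unknown symmetric functions, and there is no structural reason the monomials of $p_{w(\ell)}X_{(T',w)}$ and of $X_{(T/e,w/e)}$ cannot overlap and cancel in a way that destroys the information needed to reconstruct $T'$ and $T/e$ separately. A realistic version of this programme would first dispatch the cases in which the recursion closes inside a restricted family --- weighted caterpillars and spiders, where contracting a leaf edge keeps one in the family --- and only then attempt the general separation argument.
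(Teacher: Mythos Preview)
The paper does not prove Conjecture~\ref{conj:tree}; it is stated there as an open problem, so there is no ``paper's own proof'' to compare against. What the paper \emph{does} do in the paragraph immediately following the conjecture is directly fatal to your proposed strategy.

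You propose to attack Conjecture~\ref{conj:tree} by proving the \emph{stronger} statement that $X_{(T,w)}$ determines a vertex-weighted tree $(T,w)$ up to isomorphism, and then running an induction via deletion-contraction at a leaf. But this stronger statement is \emph{false}: the paper exhibits (following Loebl--Sereni) two non-isomorphic weighted five-vertex paths, with weight sequences $1,2,1,3,2$ and $1,3,2,1,2$, that have the same $X_{(T,w)}$. So the induction hypothesis you want to invoke at $n-1$ vertices is simply not available, and no amount of ``canonical leaf'' bookkeeping or monomial-separation argument can rescue it. Concretely, in this very counterexample the ``canonical'' leaf you would select (say, the unique weight-$2$ leaf) leads to contracted trees $(T/e,w/e)$ that are genuinely different weighted $4$-paths, yet the deletion-contraction identity forces the right-hand sides to agree; the two unknown summands therefore cannot both be recovered from the common left-hand side.

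The paper is careful to note that in this counterexample the two underlying \emph{unweighted} trees are isomorphic, and that no pair with non-isomorphic underlying trees is known. So a weighted approach is not hopeless in principle, but it cannot proceed through the statement ``$X_{(T,w)}$ determines $(T,w)$''. Any viable version of your programme would have to replace that with something weaker that still suffices for the unweighted conclusion --- for instance, that $X_{(T,w)}$ determines the isomorphism type of the underlying unweighted tree $T$ --- and your inductive step would then have to be reworked accordingly, since contracting a leaf edge changes the underlying tree as well as the weights.
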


This conjecture has been shown to be true for trees with up to $29$ vertices $\cite{heil}$.  A natural question is whether it is possible that a stronger statement holds, that the chromatic symmetric function distinguishes vertex-weighted trees.  This is false, as can be seen in the following example from $\cite{loebl}$ by comparing

(a) The five-vertex path with vertex weights $1,2,1,3,2$ in that order, and

(b) The five-vertex path with vertex weights $1,3,2,1,2$ in that order.

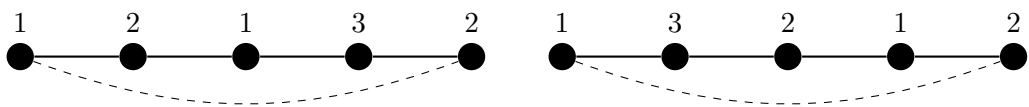
\begin{figure}[hbt]
\begin{center}
\begin{tikzpicture}[scale=1.5]

\node[label=above:{$1$}, fill=black, circle] at (0, 0)(v1){};
\node[label=above:{$2$}, fill=black, circle] at (1, 0)(v2){};
\node[label=above:{$1$}, fill=black, circle] at (2, 0)(v3){};
\node[label=above:{$3$}, fill=black, circle] at (3, 0)(v4){};
\node[label=above:{$2$}, fill=black, circle] at (4, 0)(v5){};

\draw[black, thick] (v1) -- (v2);
\draw[black, thick] (v2) -- (v3);
\draw[black, thick] (v3) -- (v4);
\draw[black, thick] (v4) -- (v5);
\draw[black, dashed] (v1) edge [bend right = 20] (v5);

\end{tikzpicture}
\hspace{0.5cm}
\begin{tikzpicture}[scale=1.5]

\node[label=above:{$1$}, fill=black, circle] at (0, 0)(v1){};
\node[label=above:{$3$}, fill=black, circle] at (1, 0)(v2){};
\node[label=above:{$2$}, fill=black, circle] at (2, 0)(v3){};
\node[label=above:{$1$}, fill=black, circle] at (3, 0)(v4){};
\node[label=above:{$2$}, fill=black, circle] at (4, 0)(v5){};

\draw[black, thick] (v1) -- (v2);
\draw[black, thick] (v2) -- (v3);
\draw[black, thick] (v3) -- (v4);
\draw[black, thick] (v4) -- (v5);
\draw[black, dashed] (v1) edge [bend right = 20] (v5);

\end{tikzpicture}

\end{center}
\caption{Weighted trees with the same chromatic symmetric function}
\label{fig:123}

\vspace{0.5cm}

\end{figure}

It is seen easily that these are not isomorphic as vertex-weighted graphs.  To see that nonetheless they have the same chromatic symmetric function, we apply the addition form \eqref{eq:edgeadd} of the deletion-contraction rule to the non-edge represented by the dashed line.  Then the chromatic symmetric function of both (a) and (b) is the same as that of a five-vertex cycle with vertex weights $1,2,1,3,2$ cyclically, added to that of a four-vertex cycle with vertex weights $3,3,2,1$ cyclically.

However, in this example the two underlying unweighted trees are isomorphic.  We do not know of an example of two vertex-weighted trees $(T,w)$ and $(T',w')$ with $T$ and $T'$ nonisomorphic as unweighted trees, but with $X_{(T,w)} = X_{(T',w')}$.

\end{subsection}

\end{section}

\begin{section}{Acknowledgments}

  The authors would like to thank Greta Panova for introducing us to the chromatic symmetric function and for numerous helpful discussions, Stephanie van Willigenburg for directing us to \cite{wolfgang}, and Jos\'{e} Aliste-Prieto, Darij Grinberg, and Bruce Sagan for helpful comments.  We are thankful to the anonymous referee for many helpful suggestions.

  The authors would also like to acknowledge Brendan McKay's webpage of combinatorial data \cite{mckay}.  We have made extensive use of his databases of graphs and trees to develop and test conjectures, both in this work and in \cite{paper}.

   The second author is supported by the National Science Foundation under Award No. DMS-1802201.

\end{section}

\end{document}